\definecolor{darkblue}{rgb}{0,0.1,0.5}
\newtheorem{theorem}{Theorem}
\newtheorem{lemma}{Lemma}
\newtheorem{proposition}{Proposition}
\newtheorem{corollary}{Corollary}
\newcommand{\1}{\boldsymbol{1}}
\newcommand{\ii}{\mathrm{i}}
\newcommand{\dd}{\mathbbm{d}}
\newcommand{\bx}{\vec{x}}
\newcommand{\bxi}{\vec{\xi}}
\newcommand{\fitMC}{\vec{f}}
\renewcommand{\vec}{\mathbf}
\renewcommand{\leq}{\leqslant}
\renewcommand{\geq}{\geqslant}
\begin{document}
\title[A DE for the asymptotic fitness distribution in the B--S model with five species]{A differential equation for the asymptotic fitness distribution in the Bak--Sneppen model with five species}
\author{Eckhard Schlemm}

\address{University College London Medical School, Gower Street, London}
\email{eckhard.schlemm.13@ucl.ac.uk}
\subjclass[2010]{Primary: 37N25, 92C42; Secondary: 60J05, 62E20}
\keywords{Bak--Sneppen model, evolutionary biology, hypergeometric function, Markov chain, steady-state distribution}

\begin{abstract}
The Bak--Sneppen model is an abstract representation of a biological system that evolves according to the Darwinian principles of random mutation and selection. The species in the system are characterized by a numerical fitness value between zero and one. We show that in the case of five species the steady-state fitness distribution can be obtained as a solution to a linear differential equation of order five with hypergeometric coefficients. Similar representations for the asymptotic fitness distribution in larger systems may help pave the way towards a resolution of the question of whether or not, in the limit of infinitely many species, the fitness is asymptotically uniformly distributed on the interval $[f_c,1]$ with $f_c\gtrapprox 2/3$.
\end{abstract}

\maketitle

\section{Introduction}
The Bak--Sneppen (B--S) model is an abstract representation of a biological system that evolves according to the Darwinian principles of random mutation and natural selection. It was introduced in \cite{Bak1993punctuated} in the context of self-organized criticality in systems with spatial interactions. 

Despite its simplicity, the B--S model captures some of the features that are believed to be characteristic of evolving biological systems. In particular, it predicts evolutionary activity on all time scales with long periods of relative stasis interrupted by bursts of activities, referred to as \emph{avalanches}. As a consequence of the absence of a characteristic time scale, evolutionary dynamics in the B--S model display long-range dependence in both the temporal and the spatial domain.  It is thus suitable as an abstract representation of systems in punctuated equilibrium, a concept that was introduced in \cite{gould1972punctuated} to explain the patterns observed in fossil records. For a more thorough discussion of these ideas, and for applications of the notion of punctuated equilibrium in other scientific disciplines we refer the reader to \cite{jensen1988self}. 

In addition to its usefulness in abstractly representing some key features of palaeontology and macro-evolution, the B--S model has also been employed to analyze the evolution of bacteria in a controlled, competitive environment. In a series of key experiments \cite{lenski1994dynamics}, Lenski and collaborators cultivated twelve initially identical populations of an E.\ coli strain over several years and conserved samples at regular time intervals. They then determined the relative fitness of the conserved samples by putting them into direct competition with a sample taken from the initial populations and measuring their relative growth rates. In \cite{donangelo2002model} it was shown that the B--S model with random mutations qualitatively reproduces some of the experimental results on relative bacterial fitness obtained in Lenski's long-term experimental evolution project \cite{lenski2015LTEE}. Using an extended multi-trait variant, \cite{bose2001bacterial} extended the explanatory scope of the B--S model to include experimental findings about the interplay of adaptation, randomness and history in bacterial evolution.

\subsection*{Informal definition of the B--S model} The Bak--Sneppen model characterizes each species in a biological system by a numerical fitness value between zero and one, which represents its degree of adaptedness to its environment and changes as the species evolves. Further, each species is assumed to directly interact with exactly two other species, where it is left unspecified if such an interaction represents competition for resources, predator-prey relations or something different entirely. The Bak--Sneppen model can therefore be visualized as points on a circle, where each point stands for an ecological niche (or a species occupying that niche) and neighboring species interact with each other.

As time progresses, the fitness landscape evolves in accordance with the following rules, representing in an abstract way the principles of random mutation and natural selection: at each time step, the least adapted species, i.\,e.\, the one with the smallest fitness parameter, is removed from the system (becomes extinct) and its place is immediately taken by a new species whose fitness is initially modelled as an independent uniformly distributed random variable.  In order to take into account the effect of this change on the local environment, the fitness parameters of the two species to either side of the least fit one are also reset to random values. This can be thought of as those two species themselves becoming extinct and superseded by new ones, or as them undergoing mutations in response to their neighbour becoming extinct.

\subsection*{Previous mathematical results}Despite its apparently easy definition, the B--S model has withstood most attempts at mathematical analysis in the past. Partial results have been obtained, however, in the context of rank-driven processes \cite{grinfeld2011bak,grinfeld2011rank} and mean-field approximations \cite{de1994simple,flyvbjerg1993mean}. Early on it was conjectured based on simulations that the steady-state fitness distribution at a fixed site converges, in the limit of large populations, to a uniform distribution on the interval $[f_c,1]$, where $f_c$ is approximately equal to 0.667, but believed to be slightly larger than $2/3$.

There is only a small number of mathematically rigorous result about the Bak--Sneppen model; in \cite{MeesterZnamenski2003limit} it is shown that the steady-state fitness at a fixed site is bounded away from one in expectation, independent of the number of species in the system;  A characterization of the limiting marginal fitness distribution, conditional on a set of critical thresholds, is given in \cite{MeesterZnamenski2004critical} (see also \cite{gillet2006bounds,gillet2006maximal}). In \cite{schlemm2012asymptotic}, the author proposes to compute the steady-state fitness distribution as the fixed point of the one-step transition equation and uses this method to describe the asymptotic fitness distribution for four species in terms of a compact rational function. In the same paper it is shown that one cannot find a similarly simple formula in the B--S model with five species, and that the fitness distribution of a randomly selected species at steady-state in this case is not only not rational, but not even a hypergeometric function.

\subsection*{Our contribution}In this paper, we revisit the Bak--Sneppen model with five species. In \cref{thm-main}, our main theorem, we establish a representation of the steady-state fitness distribution for five species in terms of the solution of an explicit differential equation with hypergeometric coefficients. This steady-state distribution encodes information about the fitness attributes of species in a system that has evolved for a long time. For instance, one can deduce from it how fit, on average, a randomly selected species from the population is expected to be; this is done in \cref{cor-marg}. Furthermore, since the steady-state distribution contains information about the joint fitness values of all species in the population, its knowledge allows to draw biologically relevant conclusions about qualitative properties of the system, such as the emergence of one or several dominant species, or the fragmentation of the eco-system into areas of different prevailing fitness. In our simple model, the symmetry of the initial configuration is preserved and no such phenomenon occurs. It is an interesting question whether in systems with a more complicated interaction between species, symmetry can be spontaneously broken.

We envisage that similar representations for the asymptotic fitness distribution in larger systems may help pave the way towards a resolution of the question of whether or not, in the limit of infinitely many species, the fitness is indeed asymptotically uniformly distributed on the interval $[f_c,1]$ with $f_c\gtrapprox 2/3$. We speculate that the techniques developed in this paper can be generalized to analyze larger systems with more than five species. It seems plausible to expect that the asymptotic joint fitness distribution in such systems can still be characterized as the solution to a certain linear differential equation, even though the coefficients might no longer be hypergeometric functions.

\section{Formalization and main result}
We adopt the following formalization of the Bak--Sneppen model from \cite{schlemm2012asymptotic}. Initially, all fitness parameters are independent uniformly distributed and after $k$ evolutionary steps the state of the system is represented by the vector $\fitMC_k\in[0,1]^5$, where the $i$th component refers to the fitness of the $i$th species. The evolutionary dynamics of the system can be expressed formally by the equation
\begin{equation*}
	\mathbb{P}\left(\vec f_{k+1}\in A\left|\vec f_k=\bx\right.\right) = \int_A{\mathbb{P}_{\bx}(\dd^5\bxi)},\quad \bx\in[0,1]^5,\quad A\in\mathscr{B}([0,1]^5),
\end{equation*}
where the one-step transition kernel $\mathbb{P}_{\bx}$ encodes the dynamics of the model and is given by
\begin{equation*}
	\mathbb{P}_{\bx}(\dd^5\bxi) = \prod_{\mu\notin\{\nu-1,\nu,\nu+1\}}{\delta_{x_\mu}(\dd\xi_\mu)\dd^3(\xi_{\nu-1},\xi_{\nu},\xi_{\nu+1})},\quad \nu=\operatorname{argmin}{\bxi}.
\end{equation*}
Here, and in the following, all vector indices are taken modulo five. The sequence $\fitMC=(\fitMC_k)_k$ is a uniformly ergodic Markov chain with absolutely continuous marginal distributions with densities $g_k:[0,1]^5\to\mathbb{R}^+$. This means that for any Borel set $A\in\mathscr{B}([0,1]^5)$,
\begin{equation*}
	\mathbb{P}\left(\vec f_k\in A\right) = \int_{A}{g_k(\bx)\dd^5 \bx},
\end{equation*}
and that the random vectors $\vec f_k$ converge in distribution to a steady-state limit $\vec f_\infty$. Moreover, the $k$-step densities $g_k$ satisfy the recursion
\begin{equation}
\label{eq-recgk}
g_{k+1}(\bx)=\sum_{\nu=1}^5\int_{[0,1]^3} \1_{\left\{\xi_2<\min(\xi_1,\xi_3,\bx_{]\nu[})\right\}}g_k\left(\bx_{]\nu[_{\bxi}}\right)\dd^3\bxi,
\end{equation}
where the vectors $\bx_{]\nu[}\in[0,1]^2$ and $\bx_{]\nu[_{\bxi}}\in[0,1]^5$ are obtained from $\bx$ by dropping the $\nu$th, and $(\nu\pm1)$th components, or replacing these components by the components of $\bxi$, respectively. Uniform ergodicity of the Markov chain $\fitMC$ implies that the densities $g_k$ converge uniformly to the density $g=g_\infty$ of the unique invariant distribution of $\fitMC$, which we recognize as the steady-state fitness distribution. We also introduce the notation 
\begin{equation*}
	F_{n,m}(x) = {}_2\operatorname{F}_1\left\{\frac13\left(n+\ii\sqrt{2}\right),\frac13\left(n-\ii\sqrt{2}\right); \frac m3; x\right\},\quad n,m\in\mathbb{Z},\quad x\in\mathbb{R},
\end{equation*}
where ${}_2\operatorname{F}_1$ denotes the Gauss hypergeometric function \cite[Section 15.1]{abramowitz1964handbook} and $\ii=\sqrt{-1}$ is the imaginary unit. The following is the main result of the paper. 
\begin{theorem}
\label{thm-main}
The limiting density $g=\lim_{k\to\infty}g_k$ is given by 
\begin{equation}
\label{eq-limdensity}
g(\bx)=\mathbf{1}_{[0,1]^5}(\bx)\sum_{\nu=1}^5{q\left(\min{\{x_\nu,x_{\nu+1}\}},\max{\{x_\nu,x_{\nu+1}\}}\right)}, 
\end{equation}
where $q(x,y) = \mathcal{G}'(1-x)\mathcal{B}'_1(1-y)+B_{\circ,0}(x)$. Here,
\begin{equation}
\label{eq-defG}\mathcal{G}(x) = \frac{3}{2}F_{2,1}\left\{1/2\right\}F_{1,2}\left\{x^3/2\right\}+\frac98x F_{4,5}\left\{1/2\right\}F_{2,4}\left\{x^3/2\right\},
\end{equation}
and the function $\mathcal{B}_1$ is the unique solution of the differential equation $\sum_{j=0}^5{c_j(y)\mathcal{B}_1^{(j)}(y)}= 0$ with boundary conditions
\begin{equation}
\mathcal{B}_1(1)=1/5,\quad \mathcal{B}_1'(1) = 0,\quad \mathcal{B}_1''(1)=-1/5, \quad\mathcal{B}_1^{(3)}(1)= 1, \quad\mathcal{B}_1^{(4)}(1)= -18/5.
\end{equation}
The coefficients $c_j(y)$, $j=0,1,\ldots,5$, are hypergeometric functions given by
\begin{subequations}
\label{eq-coeff-ODE}
\begin{align}
c_0(y) =& \frac{18 y^4}{\left(y^3+2\right)^2}\left[y \left(y^3-22\right) \mathcal{G}'(y)+\left(5 y^3-14\right) \mathcal{G}(y)\right],\\
c_1(y) =& -y c_0(y),\\
c_2(y) =& \frac{6 }{y^3+2}\left[y \left(3 y^6-38 y^3-4\right) \mathcal{G}'(y)+\left(15 y^6-10 y^3+4\right) \mathcal{G}(y)\right],\\
c_3(y) =& -12 y \left[y \left(4 y^3-1\right) \mathcal{G}'(y)+\left(5 y^3+1\right) \mathcal{G}(y)\right],\\
c_4(y) =& -3 y^2 \left[y \left(y^3+2\right) \mathcal{G}'(y)+\left(9 y^3-2\right) \mathcal{G}(y)\right],\\
c_5(y) =& y^3 \left(y^3+2\right) \left[y \mathcal{G}'(y)-\mathcal{G}(y)\right].
\end{align}
\end{subequations}
Finally, the function $B_{\circ,0}$ is given by
\begin{equation}
\label{eq-defBcirc0}
B_{\circ,0}(x) = \int_{1-x}^1{\frac1\xi\left[\mathcal{G}''(\xi)\mathcal{B}_1(\xi) - \mathcal{G}(\xi)\mathcal{B}_1''(\xi)\right]\dd\xi}.
\end{equation}
\end{theorem}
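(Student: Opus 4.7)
The plan is to derive the invariant density as a fixed point of the one-step operator defined by the recursion \eqref{eq-recgk}, exploiting the cyclic and reflection symmetries of the dynamics to reduce the fixed-point equation on $[0,1]^5$ to a hierarchy of one- and two-variable equations. Since the transition kernel and the initial law are both invariant under the dihedral action on the five sites, the steady-state density $g$ must be so as well, and because the local update touches only a triple of consecutive coordinates, it is natural to posit that $g$ decomposes as a sum of contributions attached to the five edges of the pentagon, as in \eqref{eq-limdensity}. The first step is therefore to substitute this ansatz into the fixed-point equation $g = Tg$ and integrate out the three variables $\xi_1,\xi_2,\xi_3$ using the indicator $\mathbf{1}_{\{\xi_2 < \min(\xi_1,\xi_3,\bx_{]\nu[})\}}$; after reorganising the resulting terms by the edge they affect, the equation splits into a one-variable part, a two-variable edge part, and a corner remainder, which will respectively produce $\mathcal{G}$, $\mathcal{B}_1$, and $B_{\circ,0}$.

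For the first ingredient, I would extract from the one-variable part a linear ODE with polynomial coefficients in which the combination $x^3+2$ figures naturally (from integrating a cubic factor coming from three independent uniform updates on the same cyclic triple). Performing the substitution $u = x^3/2$ converts this into a hypergeometric equation whose indicial exponents produce the parameters $\tfrac{1}{3}(n \pm \mathrm i\sqrt 2)$, so a fundamental system is spanned by $F_{1,2}(x^3/2)$ and $x F_{2,4}(x^3/2)$. The specific linear combination \eqref{eq-defG} is then pinned down by matching normalisation at $x = 0$ and the boundary value of the marginal density at $x = 1$, which fixes the two coefficients involving $F_{2,1}\{1/2\}$ and $F_{4,5}\{1/2\}$.

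For the second ingredient, I would feed $\mathcal{G}$ back into the two-variable edge equation. This is an integral equation for $\mathcal{B}_1$ whose kernel is polynomial in $\mathcal{G}$ and $\mathcal{G}'$. By differentiating five times in $y$ and using the third-order ODE satisfied by $\mathcal{G}$ to eliminate higher derivatives of $\mathcal{G}$ at each step, I expect the equation to collapse to the fifth-order linear ODE $\sum_{j=0}^5 c_j(y)\mathcal{B}_1^{(j)}(y) = 0$ with the coefficients \eqref{eq-coeff-ODE}. The boundary data at $y = 1$ are read off by evaluating the original integral equation and its first four $y$-derivatives at $y=1$: the values $1/5$, $0$, $-1/5$, $1$, $-18/5$ encode the fact that when one edge coordinate equals one, the minimum almost surely lies on a different site, which imposes exactly five consistency conditions. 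Once $\mathcal{G}$ and $\mathcal{B}_1$ are in hand, the corner remainder of the fixed-point equation integrates explicitly between $1-x$ and $1$ and yields the expression \eqref{eq-defBcirc0} for $B_{\circ,0}$; uniqueness of the invariant law, guaranteed by uniform ergodicity, then ensures that the density so constructed is indeed $g$.

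The main obstacle will be the third step, namely reducing the integral equation for $\mathcal{B}_1$ to the explicit ODE with the coefficients \eqref{eq-coeff-ODE} and verifying the precise initial data. Several parallel contributions have to be tracked at once—one for each of the three possible locations of the replaced minimum within a triple and one for each of the two endpoints of the edge—each producing derivatives of $\mathcal{G}$ that must be simplified using the third-order relation for $\mathcal{G}$; organising this bookkeeping so that the final coefficients assemble cleanly into expressions linear in $\mathcal{G}$ and $\mathcal{G}'$ with rational factors involving $y^3 + 2$ is the delicate part. Uniqueness of $\mathcal{B}_1$ as a solution of the initial-value problem at $y=1$ then follows provided the leading coefficient $c_5(y) = y^3(y^3+2)(y\mathcal{G}'(y) - \mathcal{G}(y))$ does not vanish on $(0,1)$, which I would establish by a separate analysis of the zeros of $y\mathcal{G}'(y) - \mathcal{G}(y)$ using the hypergeometric representation \eqref{eq-defG}.
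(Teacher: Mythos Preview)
Your outline has the right global shape---edge decomposition, a third-order hypergeometric ODE behind $\mathcal G$, and a fifth-order ODE for $\mathcal B_1$---but it skips the one step that actually makes the argument work, and that step is not recoverable from the fixed-point equation alone.

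The crucial fact is the \emph{separation of variables}
\[
q(x,y)\;=\;G(x)\,B_{1,\circ}(y)\;+\;B_{\circ,0}(x),
\]
i.e.\ that $q(x,y)-B_{\circ,0}(x)$ is a \emph{product} of a function of $x$ and a function of $y$. You treat this as if it drops out of the equation $g=Tg$ after ``reorganising the resulting terms by the edge they affect,'' but it does not. When you substitute the edge ansatz into $g=Tg$ for five sites and integrate the three updated coordinates, the resulting integral equation for $q$ genuinely mixes $x$ and $y$: the two ``mixed'' edges $(\xi_3,x_{\nu+2})$ and $(x_{\nu+3},\xi_1)$ produce terms like $\int q(\min\{\xi,x\},\max\{\xi,x\})\,\dd\xi$ that couple the variables, and there is no a~priori reason for the non-$j{=}0$ part of $q$ to factor. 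Without that factorisation you have no ``one-variable part'' to isolate, hence no standalone ODE defining $\mathcal G$, and no way to reduce the edge equation to a linear ODE in $y$ alone for $\mathcal B_1$.

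The paper obtains the factorisation by a different route: it works with the polynomial coefficients $\alpha_{i,j,k}$ of $q_k$, shows they stabilise to $\beta_{i,j}$ for $k\geq i+j+1$, and then proves that for every $j\geq 1$ the sequence $(\beta_{i,j})_{i}$ satisfies the \emph{same} three-term recursion in $i$ with initial data proportional to $\beta_{1,j}$. That is what forces $B_{\circ,j}(x)=\beta_{1,j}G(x)$ and hence the product structure. The fifth-order ODE for $\mathcal B_1$ is then obtained not by differentiating five times in $y$, but by deriving \emph{two} coupled equations relating $\mathcal B_0$ and $\mathcal B_1$ (one from the $i=1$ branch of the recursion, one from the $j=0$ branch), differentiating the second three times to kill $\mathcal B_0$ and $\mathcal B_0'$, and substituting the first to eliminate $\mathcal B_0'',\ldots,\mathcal B_0^{(5)}$. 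Your proposal does not supply either of these mechanisms, so as written it cannot reach the stated ODE and its coefficients \eqref{eq-coeff-ODE}.
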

Unfortunately, it does not seem possible to evaluate the differential equation $\sum_{j=0}^5{c_j(y)\mathcal{B}_1^{(j)}(y)}= 0$ in terms of known special functions. As a direct consequence of \cref{thm-main}, we obtain the fitness distribution of a single species at steady-state by computing the one-dimensional marginal of \cref{eq-limdensity}.
\begin{corollary}
\label{cor-marg}
In the Bak--Sneppen model with five species, the fitness distribution of a randomly-selected species at steady-state is absolutely continuous with density
\begin{equation}
\label{eq-marginal}
g_{\textrm{marg}}(x) = \left[\frac35 + \mathcal{B}_1'(1-x)\right]\mathbf{1}_{[0,1]}(x).
\end{equation}
\end{corollary}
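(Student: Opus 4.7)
The plan is a direct computation of the one-dimensional marginal of $g$ from \cref{eq-limdensity}. By the cyclic symmetry of the Bak--Sneppen dynamics the marginal of every coordinate is the same, so it suffices to integrate $g(x, x_2, x_3, x_4, x_5)$ over $(x_2, x_3, x_4, x_5) \in [0,1]^4$. Of the five summands in \cref{eq-limdensity}, each depending on only two consecutive coordinates, the three indexed by $\nu \in \{2,3,4\}$ do not involve $x_1$ and contribute a constant $C$, while the two indexed by $\nu \in \{1,5\}$ each pair $x_1$ with one of $x_2, x_5$. By the symmetry of $q(\min\{\cdot,\cdot\}, \max\{\cdot,\cdot\})$ in its arguments, these two terms contribute equally, giving
\begin{equation*}
g_{\textrm{marg}}(x) = C + 2\, I(x), \qquad I(x) = \int_0^x q(y, x)\, dy + \int_x^1 q(x, y)\, dy.
\end{equation*}

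Substituting $q(a,b) = \mathcal{G}'(1-a)\mathcal{B}_1'(1-b) + B_{\circ,0}(a)$ and evaluating the elementary antiderivatives decomposes $I(x)$ into four pieces involving $\mathcal{G}(1)-\mathcal{G}(1-x)$, $\mathcal{B}_1(1-x)-\mathcal{B}_1(0)$, $\int_0^x B_{\circ,0}(y)\, dy$ and $(1-x)B_{\circ,0}(x)$. The next step is to simplify the $B_{\circ,0}$-contributions using \cref{eq-defBcirc0}: differentiation yields $B_{\circ,0}'(x) = \frac{1}{1-x}\bigl[\mathcal{G}''(1-x)\mathcal{B}_1(1-x) - \mathcal{G}(1-x)\mathcal{B}_1''(1-x)\bigr]$, and one integration by parts rewrites $\int_0^x B_{\circ,0}(y)\, dy + (1-x)B_{\circ,0}(x)$ as $B_{\circ,0}(x) - \int_0^x y\, B_{\circ,0}'(y)\, dy$. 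Applying the Lagrangian identity $\mathcal{G}''\mathcal{B}_1 - \mathcal{G}\mathcal{B}_1'' = (\mathcal{G}'\mathcal{B}_1 - \mathcal{G}\mathcal{B}_1')'$ under the resulting integral sign and integrating by parts once more ties the $B_{\circ,0}$-terms back to $\mathcal{G}(1-x)\mathcal{B}_1'(1-x)$ and $\mathcal{G}'(1-x)\mathcal{B}_1(1-x)$, producing cancellations that collapse all of $I(x)$ except for an additive constant and a single term proportional to $\mathcal{B}_1'(1-x)$.

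Assembling the coefficients of $\mathcal{B}_1'(1-x)$ (using the value of $\mathcal{G}(1)$ read off from \cref{eq-defG}) is expected to yield precisely $1$, so the final answer has the form $g_{\textrm{marg}}(x) = C' + \mathcal{B}_1'(1-x)$ for some constant $C'$. This constant is then pinned down by the normalization $\int_0^1 g_{\textrm{marg}}(x)\, dx = 1$ together with $\int_0^1 \mathcal{B}_1'(1-x)\, dx = \mathcal{B}_1(1) - \mathcal{B}_1(0)$, and the initial condition $\mathcal{B}_1(1) = 1/5$ combined with the value $\mathcal{B}_1(0)$ obtained by propagating the boundary data at $y=1$ through the ODE of \cref{thm-main}; this forces $C' = 3/5$, establishing \cref{eq-marginal}. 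The main obstacle is the bookkeeping of boundary terms across the chain of integrations by parts, and the verification that the $B_{\circ,0}$-terms cancel all $x$-dependence other than $\mathcal{B}_1'(1-x)$. This cancellation is essentially by design: $B_{\circ,0}$ was constructed in \cref{thm-main} precisely to absorb the Lagrangian combination $\mathcal{G}''\mathcal{B}_1 - \mathcal{G}\mathcal{B}_1''$ that arises when $q$ is integrated over one coordinate, and it is this structural compatibility---rather than any fortunate arithmetic---that drives the simplification.
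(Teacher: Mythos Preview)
Your setup coincides with the paper's: both start from the decomposition
\[
g_{\textrm{marg}}(x)=\tfrac35+2I(x),\qquad I(x)=\int_0^x q(y,x)\,dy+\int_x^1 q(x,y)\,dy,
\]
where the constant $\tfrac35$ arises because each of the three summands in \cref{eq-limdensity} not involving $x_1$ integrates to $\tfrac15$.

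The two arguments diverge at the next step. You evaluate $I(x)$ directly, carrying the $B_{\circ,0}$--contributions through two rounds of integration by parts and the Wronskian identity $\mathcal{G}''\mathcal{B}_1-\mathcal{G}\mathcal{B}_1''=(\mathcal{G}'\mathcal{B}_1-\mathcal{G}\mathcal{B}_1')'$. This works, but the paper takes a shorter route: it differentiates $g_{\textrm{marg}}$ once and observes that $(1-x)B_{\circ,0}'(x)$ is \emph{exactly} the combination supplied by \cref{eq-defBcirc0} (equivalently \cref{eq-ODE1-1var}). All $B_{\circ,0}$--dependence cancels in a single line, leaving $I'(x)=B_{1,\circ}'(x)=[\mathcal{B}_1'(1-x)]'$, with no integration by parts and no boundary bookkeeping.

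For the additive constant, your plan (normalise $\int_0^1 g_{\textrm{marg}}=1$ and ``propagate the boundary data through the ODE'' to obtain $\mathcal{B}_1(0)$) is unnecessarily indirect. From its very definition $\mathcal{B}_1(x)=\int_{1-x}^1 B_{1,\circ}(\xi)\,d\xi$ one has $\mathcal{B}_1(0)=0$; no ODE solving is needed. The paper instead evaluates at $x=0$: since $G(0)=0$ and $B_{\circ,0}(0)=0$ one gets $I(0)=0$, whence $g_{\textrm{marg}}(0)=\tfrac35$, and this pins down the constant immediately using the boundary condition $\mathcal{B}_1'(1)=0$.

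In summary, your plan is sound and reaches the same conclusion, but the paper's device of differentiating first exploits \cref{eq-defBcirc0} more efficiently and sidesteps both the Lagrangian identity and the delicate tracking of boundary terms that you flag as ``the main obstacle''.
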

\begin{proof}
	Without loss of generality we can compute the distribution of the fitness of a fixed species, say the first one, which we call $f_{\infty,1}$. Averaging over the fitness values of the remaining species and using \cref{eq-limdensity} leads to the expression
\begin{equation*}
g_{\textrm{marg}}(x) = \int_{[0,1]^4}{g(x,\xi_2,\ldots,\xi_5)\dd^4\xi}=\left(\frac35+2\int_0^x{q(y,x)\dd y}+2\int_x^1{q(x,y)\dd y}\right)\mathbf{1}_{[0,1]}(x)
\end{equation*}
for the density of $f_{\infty,1}$. Plugging in the explicit formula for $q$ stated in \cref{thm-main}, differentiating once with respect to $x$ and using \cref{eq-defBcirc0} to eliminate $B_{\circ,0}$ results in the equation $g_{\textrm{marg}}'(x) = \mathcal{B}_1''(x)$. The observation that $g_{\textrm{marg}}(0)=3/5$ together with $\mathcal{B}_1(1)=0$ completes the proof.
\end{proof}
The graphs of the limiting density $g_{\textrm{marg}}$ as well as the marginals of the $k$-step densities $g_k$ are depicted in \cref{fig-margdens}, illustrating the convergence asserted in \cref{thm-main}. Moreover, \cref{fig-comparemargdens} compares the cumulative distribution functions of the steady-state fitness value at a fixed site in die Bak--Sneppen model with three (trivial), four \cite[Theorem 1]{schlemm2012asymptotic} and five (\cref{thm-main}) species, as well as their conjectured limit -- assuming $f_c=2/3$ -- in a system with an infinite number of species. Clearly, a lot of work remains to be done to fill the gap between five and infinitely many species.

\begin{figure}
	\begin{subfigure}[t]{0.45\textwidth}\centering		
	\includegraphics[width=\textwidth]{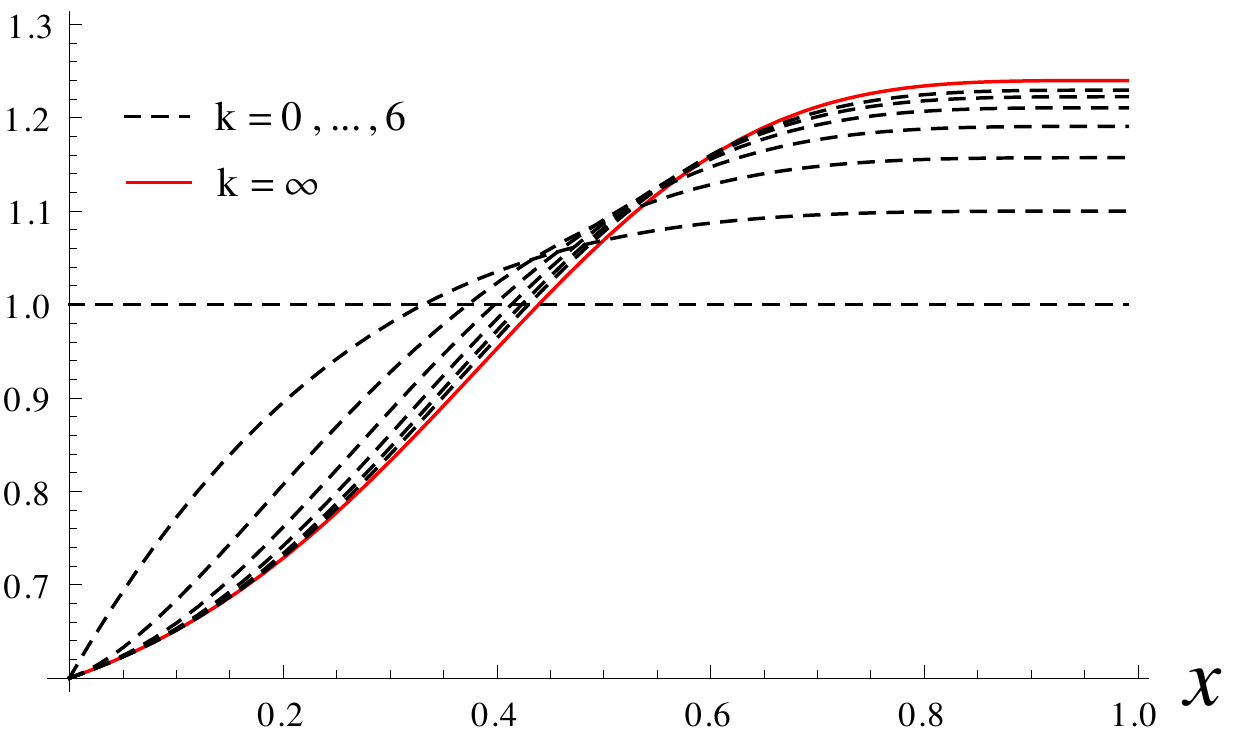}
\caption{Plot of the densities of the one-dimensional marginal distributions of $\fitMC_k$ for $k=0,\ldots,6$ (dashed), together with their limit (solid line) as given by \cref{eq-marginal}.}
\label{fig-margdens}
\end{subfigure}
\hspace{.05\linewidth}
\begin{subfigure}[t]{0.45\textwidth}\centering
\center
	\includegraphics[width=\textwidth]{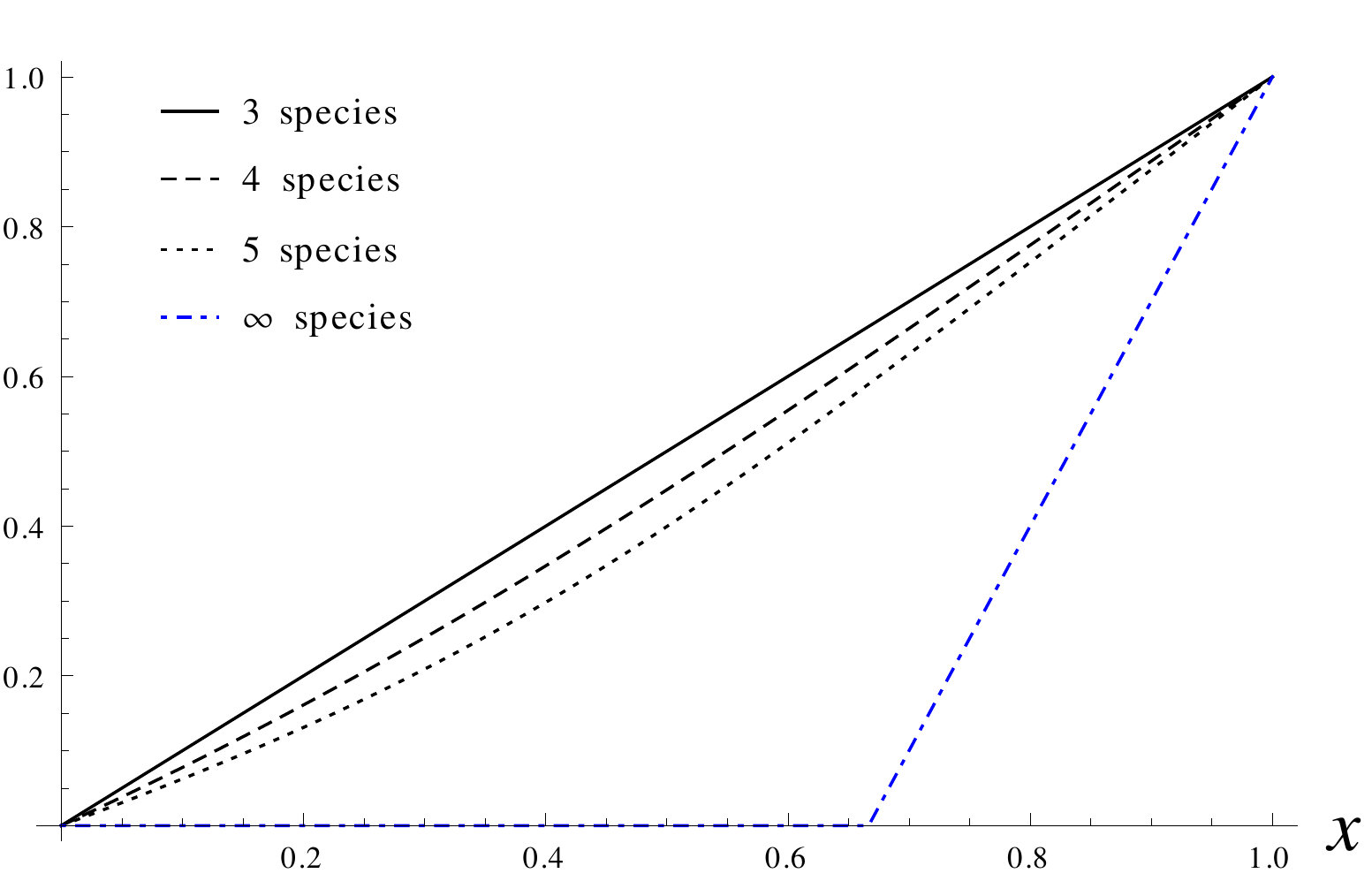}
\caption{Plot of the cumulative distribution functions of the marginal steady-state fitness distribution in the Bak--Sneppen model with three (solid), four (dashed) and five (dotted) species, as well as their conjectured limit as the number of species goes to infinity (dash-dotted line).}
\label{fig-comparemargdens}
\end{subfigure}
\end{figure}

\section{Proof of the main theorem}
It was shown in \cite[Proposition 2]{schlemm2012asymptotic} that the density of the joint distribution of the fitness parameters after $k$ steps, starting from a uniform distribution $\mathcal{U}([0,1])^{\otimes 5}$, is a polynomial given by
\begin{align*}
g_k(\bx) = \left[\sum_{\nu=1}^5{q_k\left(\min{\{x_\nu,x_{\nu+1}\}},\max{\{x_\nu,x_{\nu+1}\}}\right)}\right],
\end{align*}
where the functions $q_k\in\mathbb{Q}[x,y]$ can be written as
\begin{equation*}
q_k(x,y) = \sum_{i,j\geq 0}\alpha_{i,j,k}\,x^i\,y^j.
\end{equation*}
The coefficients of $q_k$ for $k=1,\ldots,5$, as computed from \cref{eq-recgk}, are tabulated in \cref{tab-coeff-k1,tab-coeff-k2,tab-coeff-k3,tab-coeff-k4,tab-coeff-k5}.
%
%
%
In order to identify the function $g$ featuring in \cref{thm-main}, it is thus sufficient to compute the uniform limit
\begin{equation}
\label{eq-def-q-as-lim}
q(x,y)\coloneqq\lim_{k\to\infty}q_k(x,y)=\lim_{k\to\infty}\sum_{i,j}{\alpha_{i,j,k}x^iy^j}. 
\end{equation}
This will occupy most of the rest of this section. First we recall an explicit recursion for the coefficients $\alpha_{i,j,k}$ that was derived in \cite[Proposition 5]{schlemm2012asymptotic}. It corresponds to -- and is derived from -- the recursion \labelcref{eq-recgk} for the densities $g_k$. It obviates the need to evaluate any integrals and thus allows for the functions $g_k$ to be determined much more quickly and efficiently. 
\begin{proposition}
\label{prop-rec-alpha5}
The coefficients $\alpha_{i,j,k}$ vanish for $i=0$ and have the following properties:
\begin{enumerate}[i)]
\item For $j=0$, they satisfy $\alpha_{1,0,k}=0$, $\alpha_{2,0,k+1}=2\sum_{p=0}^{3k+1}\frac{\alpha_{1,p,k}}{p+1}$, as well as
\begin{equation}
\begin{split}
\label{eq-recn5j0}
\alpha_{i,0,k+1} =& \alpha_{i-1,0,k}-\left[1+\frac{1}{(i-1)i}\right]\alpha_{i-2,0,k}+\left[\frac{1}{3}+\frac{1}{(i-2)i}\right]\alpha_{i-3,0,k}\\
&\quad+\frac{i+2}{i}\sum_{p=0}^{3k+1}\frac{\alpha_{i-1,p,k}}{p+1}-\frac{i+4}{2i}\sum_{p=0}^{3k+1}\frac{\alpha_{i-2,p,k}}{p+1}-\frac{i+2}{i}\sum_{p=0}^{i-2}\frac{\alpha_{i-2-p,p,k}}{p+1}\\
&\quad+\frac{i+4}{2i}\sum_{p=0}^{i-3}\frac{\alpha_{i-3-p,p,k}}{p+1}+\sum_{p=0}^{i-2}\frac{\alpha_{i-2-p,p,k}}{i-p} -\frac{1}{2} \sum_{p=0}^{i-3}\frac{\alpha_{i-3-p,p,k}}{i-p},\quad i\geq3.
\end{split}
\end{equation}
\item For $j\geq 1$, they satisfy the recursion
\begin{equation}
\label{eq-recn5jgeq1}
\alpha_{i,j,k+1} = \begin{cases}
\sum_{p=0}^{3k+1}\frac{\alpha_{j,p,k}}{p+1}+\sum_{p=0}^{j-1}\frac{2p-j+1}{(p+1)(j-p)}\alpha_{j-1-p,p,k},		      & i=1,\\
\alpha_{1,j,k}-\frac{1}{2}\alpha_{1,j,k+1},		      & i=2,\\
\alpha_{i-1,j,k}-\left[1+\frac{1}{i(i-1)}\right]\alpha_{i-2,j,k}+\left[\frac{1}{3}+\frac{1}{i(i-2)}\right]\alpha_{i-3,j,k},		      & i\geq3.
                     \end{cases}
\end{equation}
\end{enumerate}
\end{proposition}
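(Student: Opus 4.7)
The plan is to substitute the polynomial decomposition
\[
g_k(\bz)=\sum_{\mu=1}^5 q_k\!\left(\min\{z_\mu,z_{\mu+1}\},\max\{z_\mu,z_{\mu+1}\}\right),\qquad q_k(x,y)=\sum_{i,j\ge 0}\alpha_{i,j,k}\,x^i y^j,
\]
into the density recursion \labelcref{eq-recgk} and to equate coefficients of like monomials on both sides. For each fixed index $\nu$, set $\bz=\bx_{]\nu[_\bxi}$, whose components in cyclic order are $(x_{\nu-2},\xi_1,\xi_2,\xi_3,x_{\nu+2})$. The cyclic sum $\sum_{\mu}q_k(\min\{z_\mu,z_{\mu+1}\},\max\{z_\mu,z_{\mu+1}\})$ then splits into five geometrically distinct pieces classified by pair type: one unchanged pair $(x_{\nu+2},x_{\nu-2})$; two all-new pairs $(\xi_1,\xi_2)$ and $(\xi_2,\xi_3)$ in which the indicator of \labelcref{eq-recgk} automatically enforces $\xi_2=\min$; and two mixed pairs $(x_{\nu-2},\xi_1)$ and $(\xi_3,x_{\nu+2})$ whose $\min/\max$ ordering depends on whether $\xi_{1,3}$ is smaller or larger than $x_{\nu\mp 2}$.

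For each type of contribution I would evaluate the triple integral $\int_{[0,1]^3}\1_{\{\xi_2<\min(\xi_1,\xi_3,x_{\nu-2},x_{\nu+2})\}}\,q_k(\cdots)\,\dd^3\bxi$ monomial by monomial. Integrating $\xi_2$ out first over the interval $(0,\xi_1\wedge\xi_3\wedge x_{\nu-2}\wedge x_{\nu+2})$ raises the $\xi_2$-exponent by one and produces the factor $1/(p+1)$ that decorates every sum appearing in \labelcref{eq-recn5j0,eq-recn5jgeq1}. The remaining integrations over $\xi_1,\xi_3\in[0,1]$ against monomials and the piecewise-linear factor $\min(\xi_{1,3},x_{\nu\mp 2})$ collapse into the partial sums $\sum_{p=0}^{3k+1}\alpha_{\cdot,p,k}/(p+1)$ and the truncated sums $\sum_{p=0}^{i-2}\alpha_{i-2-p,p,k}/(i-p)$ appearing in the proposition. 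The index shifts $i\mapsto i-1,i-2,i-3$ and the prefactors $1+1/[i(i-1)]$, $1/3+1/[i(i-2)]$ in the $i\ge 3$ case emerge from a partial-fraction expansion applied after rewriting the unchanged old-old contribution in terms of the current running pair $(x_\mu,x_{\mu+1})$.

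Having performed all the integrations, I would reassemble the result into the canonical form $\sum_{\mu=1}^5 q_{k+1}(\min\{x_\mu,x_{\mu+1}\},\max\{x_\mu,x_{\mu+1}\})$ and read off $\alpha_{i,j,k+1}$ by matching coefficients of $x^i y^j$. The case split $i=1,\,2,\,\ge 3$ in \labelcref{eq-recn5jgeq1} records which of the five contribution types can produce a given power of $x$: the mixed pairs feed the low-$i$ terms via the ordering-dependent integrations; the unchanged pair feeds the higher-degree terms $i\ge 3$ via the Taylor-like shift; and the all-new pairs contribute only to the $j=0$ column of \labelcref{eq-recn5j0}, because both free $\xi$-variables in those pairs are integrated over the full interval $[0,1]$.

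The main obstacle is the combinatorial bookkeeping. In particular, the asymmetric weight $(2p-j+1)/[(p+1)(j-p)]$ in the $i=1$, $j\ge 1$ case tracks the two possible orderings inside a mixed pair and the corresponding reassignment of $\min$ and $\max$ between the old and new components; getting this weight right requires carefully accounting for the symmetry breaking introduced by the ansatz $q_k(\min,\max)$. As a concrete sanity check, one can verify that the recursion reproduces the explicit coefficients $\alpha_{i,j,k}$ for $k=1,\ldots,5$ tabulated in \cref{tab-coeff-k1,tab-coeff-k2,tab-coeff-k3,tab-coeff-k4,tab-coeff-k5}.
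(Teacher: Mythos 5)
The paper itself offers no proof of this proposition: it is explicitly recalled from \cite[Proposition 5]{schlemm2012asymptotic}, so there is no in-paper argument to compare against. Your outline is nonetheless the natural strategy and surely the one underlying the cited derivation: substitute the ansatz $g_k(\bz)=\sum_{\mu}q_k(\min\{z_\mu,z_{\mu+1}\},\max\{z_\mu,z_{\mu+1}\})$ into \cref{eq-recgk}, classify the five adjacent pairs of $\bx_{]\nu[_{\bxi}}$ into unchanged, all-new and mixed, integrate out $\bxi$, and match coefficients; your identification of which pair type feeds which part of the recursion is broadly correct.

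As written, however, this is a plan rather than a proof, and the entire content of the proposition lies in the computations you defer: the weight $(2p-j+1)/[(p+1)(j-p)]$, the corrections $1/[i(i-1)]$ and $1/[i(i-2)]$, and the distinction between sums truncated at $3k+1$ versus at $i-2$ or $i-3$ are all asserted to ``emerge'' without being derived. Two concrete points need repair. First, the $i$-dependent corrections in the $i\geq3$ case cannot come from ``a partial-fraction expansion of the unchanged old--old contribution'': that contribution is $q_k(m,M)\cdot\tfrac13\left[1-(1-m)^3\right]$ with $m=\min\{x_{\nu-2},x_{\nu+2}\}$, which produces exactly the clean coefficients $1$, $-1$, $\tfrac13$ multiplying $\alpha_{i-1,j,k}$, $\alpha_{i-2,j,k}$, $\alpha_{i-3,j,k}$; the terms $1/[i(i-1)]$ and $1/[i(i-2)]$ must therefore be traced to the mixed pairs. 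Second, after integration the mixed-pair contributions depend on \emph{both} surviving coordinates $x_{\nu-2}$ and $x_{\nu+2}$ (via the factor $\min(\xi_1,\xi_3,x_{\nu-2},x_{\nu+2})$ produced by the $\xi_2$-integration), and the all-new pairs yield functions of $\min\{x_{\nu-2},x_{\nu+2}\}$; reassembling all of these into the canonical form $\sum_{\mu}q_{k+1}(\min\{x_\mu,x_{\mu+1}\},\max\{x_\mu,x_{\mu+1}\})$ is not a unique decomposition, and specifying how the contributions indexed by $\nu$ are assigned to the pairs indexed by $\mu$ is precisely where the bookkeeping you call the ``main obstacle'' lives. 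Without that step the recursion cannot actually be read off, and the proposed numerical check against \cref{tab-coeff-k1,tab-coeff-k2,tab-coeff-k3,tab-coeff-k4,tab-coeff-k5} is supporting evidence, not a proof.
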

The analysis of this three-dimensional recursion is simplified considerably by the fact that for each $i,j$ the sequence $(\alpha_{i,j,k})_k$ becomes eventually constant. More precisely, we have the following result which may be compared to \cite[Lemma 2]{schlemm2012asymptotic} and can be proved along the same lines.
\begin{lemma}
\label{lemma-coeff-constant}
For each $i,j$ there exists a rational number $\beta_{i,j}$ such that $\alpha_{i,j,k}=\beta_{i,j}$ for all $k\geq i+j+1$. In particular, $\lim_{k\to\infty}{\alpha_{i,j,k}}=\beta_{i,j}$.
\end{lemma}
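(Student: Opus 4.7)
I would prove the lemma by strong induction on the total level $s := i+j$, following the template of \cite[Lemma 2]{schlemm2012asymptotic}. The base cases $s\leq 1$ are immediate from $\alpha_{0,j,k}=\alpha_{1,0,k}=0$, which are built into \cref{prop-rec-alpha5}. For the inductive step at $s\geq 2$, I would assume the stabilization statement for all $(i',j')$ with $i'+j'<s$ and process the pairs $(i,j)$ at level $s$ in the order $i=1$, $i=2$, $i\geq 3$, so that within the level each subcase refers back only to subcases already settled.

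For $i\geq 3$ the third line of \cref{eq-recn5jgeq1} (if $j\geq 1$) or the ``local'' part of \cref{eq-recn5j0} (if $j=0$) writes $\alpha_{i,j,k+1}$ as a rational combination of $\alpha_{i-1,j,k},\alpha_{i-2,j,k},\alpha_{i-3,j,k}$ plus the infinite-looking sums treated in the next paragraph. The three local coefficients sit at levels $s-1,s-2,s-3$, so by the inductive hypothesis each equals $\beta_{i',j'}$ for $k\geq s$, and hence $\alpha_{i,j,k+1}$ stabilizes from $k+1=s+1=i+j+1$ onward. The $i=2$ case drops out immediately from the identity $\alpha_{2,j,k+1}=\alpha_{1,j,k}-\tfrac12\alpha_{1,j,k+1}$ once the $i=1$ case at level $s$ is in hand.

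The main obstacle is the $i=1$ case and, analogously, the sums of the form $\sum_{p=0}^{3k+1}\alpha_{r,p,k}/(p+1)$ appearing in \cref{eq-recn5jgeq1} (with $r=j$) and in \cref{eq-recn5j0} (with $r=i-1,i-2$). The summand indices satisfy $r+p\geq r+1$, so some contributions lie at levels at or above $s$, beyond the immediate reach of the inductive hypothesis on $s$. The plan to circumvent this is to first establish, via a short auxiliary induction on $k$ applied directly to the recursions of \cref{prop-rec-alpha5}, a polynomial-degree bound of the form $\alpha_{i,j,k}=0$ whenever $i+j$ exceeds a prescribed linear function of $k$; this truncates each sum over $p$ to a finite range. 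A term-by-term comparison of the truncated sums at consecutive values of $k$ then shows that the incremental contributions either vanish (by the degree bound) or match already-stabilized values $\beta_{r,p}$ at levels strictly below $s$ (by the inductive hypothesis), so the full sum is independent of $k$ for $k$ sufficiently large.

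Plugging this stabilization back into the $i=1$ branch of \cref{eq-recn5jgeq1} and into \cref{eq-recn5j0} produces $\alpha_{1,j,k+1}=\beta_{1,j}$ and $\alpha_{i,0,k+1}=\beta_{i,0}$ for $k+1\geq s+1$, which completes the level-$s$ step and closes the outer induction. Rationality of each $\beta_{i,j}$ is automatic because the recursions have rational coefficients and are seeded by rational data. The delicate bookkeeping needed to synchronize the degree bound with the stabilization window of the inductive hypothesis is, I expect, the main source of effort and the principal novelty relative to the four-species proof in \cite{schlemm2012asymptotic}.
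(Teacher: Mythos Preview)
Your overall plan---strong induction on the level $s=i+j$, with the $i=1$ branch and the row sums $\sum_{p}\alpha_{r,p,k}/(p+1)$ singled out as the crux---matches what the paper indicates (it gives no details and simply refers to \cite[Lemma~2]{schlemm2012asymptotic}). However, the resolution you sketch for that crux does not close as written.

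Take the $i=1$ case at level $s$, so $j=s-1\geq 1$. The first sum in \cref{eq-recn5jgeq1} is $\sum_{p}\alpha_{j,p,k}/(p+1)$ with \emph{first} index $j=s-1$; its $p$th term therefore sits at level $(s-1)+p$, which is $\geq s$ for every $p\geq 1$. Your inductive hypothesis on $s$ controls only the single term $p=0$, and a degree bound of the shape $\alpha_{r,p,k}=0$ for $r+p$ beyond a linear function of $k$ only kills the tail $p\gtrsim 3k-(s-1)$. The entire middle range $1\leq p\lesssim 3k-s$ consists of coefficients at levels $\geq s$ that have \emph{not} stabilized under the hypothesis, so the assertion that ``the incremental contributions \ldots match already-stabilized values $\beta_{r,p}$ at levels strictly below $s$'' is false for these terms. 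The same obstruction appears in the two $\sum_{p=0}^{3k+1}$ sums of \cref{eq-recn5j0}, where $r=i-1$ or $r=i-2$ and again $r+p\geq s$ once $p\geq 1$ or $2$.

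What is actually needed is a mechanism showing, for each fixed first index $r$, that the row sums $S_r(k)=\sum_{p}\alpha_{r,p,k}/(p+1)$ are eventually constant in $k$ \emph{before} one knows term-by-term stabilization of the summands. One way to arrange this is to abandon the pure induction on $i+j$ in favour of a scheme that exploits the special structure of the $j\geq 1$, $i\geq 3$ line of \cref{eq-recn5jgeq1}: that recursion is purely local in the first index (same $j$, smaller $i$), so it propagates stabilization along a row once the row's seeds $\alpha_{1,j,\cdot}$, $\alpha_{2,j,\cdot}$ are under control, and this interlocks with the $i=1$ equation (which feeds row $j$ back into $\alpha_{1,j,\cdot}$) in a way that a carefully ordered double induction can untangle. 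Alternatively one can read $S_r(k)$ as the coefficient of $x^r$ in $\int_0^1 q_k(x,y)\,\dd y$ and argue directly. Either way, the bookkeeping you flag as ``the main source of effort'' is not merely bookkeeping: an extra idea beyond the degree bound plus level-$s$ hypothesis is required.
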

In \cref{tab-coeff-k1,tab-coeff-k2,tab-coeff-k3,tab-coeff-k4,tab-coeff-k5} the coefficients $\alpha_{i,j,k}$ with $k\geq i+j+1$ are printed in bold and the frontier coefficients $\alpha_{i,j,i+j+1}$ are marked with boxes. The assertion of \cref{lemma-coeff-constant} is thus easily appreciated by visual inspection. \Cref{eq-def-q-as-lim} in combination with \cref{lemma-coeff-constant} allows to recognise the limiting function $q$ as the generating function of the limiting coefficients $\beta_{i,j}$, i.\,e.\ $q(x,y)=\sum_{i,j}{\beta_{i,j}x^iy^j}\eqqcolon B_{\circ,\circ}(x,y)$. Before proceeding further we define convenient notation for the generating functions of the arrays $\alpha_{i,j,k}$ and $\beta_{i,j}$ along various dimensions and with various indices held fixed.

\begin{align*}
B_{\circ,j}(x) =& \sum_{i=0}^\infty{\beta_{i,j}x^i},\quad B_{i,\circ}(y) = \sum_{j=0}^\infty{\beta_{i,j}y^j},\\
A_{i,j,\circ}(z) =& \sum_{k=0}^\infty{\alpha_{i,j,k}z^k},\quad A_{\circ,j,\circ}(x,z) = \sum_{i,k\geq 0}{\alpha_{i,j,k}x^iz^k},\quad A_{i,\circ,\circ}(y,z) = \sum_{j,k\geq 0}{\alpha_{i,j,k}y^jz^k}.
\end{align*}
Here, the formal variables $x$, $y$ and $z$ correspond to indices $i$, $j$ and $k$, respectively, and the symbol $\circ$ indicates summation over the index that it replaces. The next result establishes how passing to the limit $k\to\infty$ can be accomplished at the level of generating functions.
\begin{lemma}
\label{lemma-Ggenred}
For each $x,y\in[0,1]$, it holds that $B_{\circ,\circ}(x,y)$ equals $\lim_{z\to 1^-}{(1-z)A_{\circ,\circ,\circ}(x,y,z)}$. Similarly, for non-negative integers $i,j$, it holds that
\begin{equation}
B_{i,\circ}(y)=\lim_{z\to 1^-}{(1-z)A_{i,\circ,\circ}(y,z)},\quad B_{\circ,j}(x)=\lim_{z\to 1^-}{(1-z)A_{\circ,j,\circ}(x,z)}.
\end{equation}
\end{lemma}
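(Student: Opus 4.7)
The plan is to reduce the lemma to the elementary Abel-summation identity for an eventually-constant sequence, using \cref{lemma-coeff-constant} as the main input. For a single pair $(i,j)$, constancy of $\alpha_{i,j,k}$ for $k\geq i+j+1$ lets me split
\begin{equation*}
A_{i,j,\circ}(z)=\underbrace{\sum_{k=0}^{i+j}\alpha_{i,j,k}z^k}_{\text{polynomial }P_{i,j}(z)}+\beta_{i,j}\sum_{k=i+j+1}^\infty z^k=P_{i,j}(z)+\frac{\beta_{i,j}\,z^{i+j+1}}{1-z}.
\end{equation*}
Multiplying by $(1-z)$ kills the polynomial contribution as $z\to 1^-$, yielding $(1-z)A_{i,j,\circ}(z)\to\beta_{i,j}$. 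This toy version of the claim, applied coefficient by coefficient, is what will drive everything else.

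Next I would rearrange the triple generating function: since for each $k$ the polynomial $q_k(x,y)=\sum_{i,j}\alpha_{i,j,k}x^iy^j$ has only finitely many nonzero terms, we may legitimately write
\begin{equation*}
A_{\circ,\circ,\circ}(x,y,z)=\sum_{k\geq 0}z^k\,q_k(x,y),
\end{equation*}
so that $(1-z)A_{\circ,\circ,\circ}(x,y,z)$ is precisely the Abel transform of the sequence $(q_k(x,y))_k$. The uniform ergodicity of the Markov chain $\fitMC$, stated after \cref{eq-recgk}, guarantees that $g_k\to g$ uniformly on $[0,1]^5$; this lifts to uniform convergence $q_k\to q=B_{\circ,\circ}$ on $[0,1]^2$ because the symmetric representation \cref{eq-limdensity} determines $q_k$ from $g_k$ continuously. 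Uniform convergence of a bounded sequence to a limit immediately makes its Abel means converge to the same limit, giving the first assertion.

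The two one-parameter statements are obtained from the same scheme. For fixed $i$, setting $Q_{i,k}(y):=\sum_j\alpha_{i,j,k}y^j$ one has
\begin{equation*}
(1-z)A_{i,\circ,\circ}(y,z)=(1-z)\sum_{k\geq 0}z^k\,Q_{i,k}(y).
\end{equation*}
By \cref{lemma-coeff-constant} each coefficient $\alpha_{i,j,k}$ eventually equals $\beta_{i,j}$, so $Q_{i,k}(y)\to B_{i,\circ}(y)$ pointwise; combined with a uniform-in-$k$ bound on $Q_{i,k}$ on $[0,1]$ — which one reads off either from the recursion \cref{eq-recn5jgeq1} or, more directly, from the fact that $Q_{i,k}(y)$ is recovered as the coefficient of $x^i$ in the uniformly bounded family $q_k(x,y)$ via a Cauchy estimate — Abel's theorem delivers the desired limit. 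The argument for $B_{\circ,j}$ is verbatim the same with the roles of $i$ and $j$ swapped.

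The step I expect to be the main obstacle is the last one: upgrading the coefficient-wise convergence supplied by \cref{lemma-coeff-constant} to convergence of the one-variable partial generating functions $Q_{i,k}(y)\to B_{i,\circ}(y)$ on $[0,1]$. Pointwise convergence of coefficients does not by itself entail convergence of the corresponding power-series sections, so a uniform bound is essential; once that is in place, the lemma reduces to three clean applications of the same Abel-summation principle seeded by the eventual constancy of $\alpha_{i,j,k}$.
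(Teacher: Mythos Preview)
Your proposal is correct and rests on the same idea as the paper's proof: the eventual constancy supplied by \cref{lemma-coeff-constant} together with an Abel-type argument. The paper's organization differs only in that it sums your ``toy'' identity over all $(i,j)$ at once, writing
\[
A_{\circ,\circ,\circ}(x,y,z)=\sum_{i,j}\sum_{k\leq i+j}\alpha_{i,j,k}x^iy^jz^k+\frac{z}{1-z}\,B_{\circ,\circ}(zx,zy),
\]
so that the limiting piece is isolated explicitly rather than recovered via Abel summation of $(q_k(x,y))_k$; the same decomposition handles the one-parameter variants verbatim. This is not a genuinely different route, just a different packaging of the same splitting.

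One technical caveat on your treatment of the one-parameter cases: the Cauchy-estimate argument you sketch for bounding $Q_{i,k}(y)$ uniformly in $k$ is not quite legitimate as stated, because the uniform bound on $q_k$ is only available on the real square $[0,1]^2$, not on a complex disc in $x$ where a contour integral would live. The paper is equally terse at this point (it simply asserts that the first term above vanishes after multiplication by $(1-z)$, and says the one-parameter statements follow in the same way), so neither treatment fully spells out this step. If you want a clean fix, note that the bracketed coefficient of $z^k$ in the first term equals $q_k(x,y)-\sum_{i+j<k}\beta_{i,j}x^iy^j$, which tends to zero because both $q_k\to B_{\circ,\circ}$ and the partial sums converge; the analogous observation handles $A_{i,\circ,\circ}$ and $A_{\circ,j,\circ}$.
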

\begin{proof}
It suffices to prove the first claim, which follows directly from \cref{lemma-coeff-constant}; it allows us to write
\begin{equation*}
A_{\circ,\circ,\circ}(x,y,z) = \sum_{i,j}\sum_{k\leq i+j}{\alpha_{i,j,k}x^iy^jz^k} + \frac{z}{1-z}\sum_{i,j}{\beta_{i,j}(zx)^i(zy)^j}.
\end{equation*}
After multiplication by $(1-z)$ the first term vanishes as $z$ approaches one, whereas the second one converges to $B_{\circ,\circ}(x,y)$.
\end{proof}
We now begin analyzing the recursion for $\alpha_{i,j,k}$ in more detail. Throughout, we employ the powerful technique of generating functions as described in \cite{wilf2006generating}. In particular, we make use of the fact that a linear recursion equation for a sequence $c_n$ can be transformed into a differential equation for the generating function $x\mapsto\sum_n{c_n x^n}$ by multiplying the original recursion by $x^n$ and summing over $n$. Indeed, if the original recursion equation has polynomial coefficients, this property is shared by the resulting differential equation.
\begin{proposition}
\label{prop-Bcircj}
For positive integers $j$, the generating function $B_{\circ,j}$ is given by $B_{\circ,j}(x)=\beta_{1,j}G(x)$, where
\begin{equation}
\label{eq-GDef}
G(x) = \frac{9}{8}\left[F_{4,5}\left\{1/2\right\}F_{2,1}\left\{\frac{(1-x)^3}{2}\right\}-(1-x)^2F_{2,1}\left\{1/2\right\}F_{4,5}\left\{\frac{(1-x)^3}{2}\right\}\right].
\end{equation}
\end{proposition}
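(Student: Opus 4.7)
My plan rests on the fact that the recursion \cref{eq-recn5jgeq1}, for $j \geq 1$, decouples across different values of $j$: its coefficients are $j$-independent, and the right-hand side of the equation for $\alpha_{i,j,k+1}$ involves only $\alpha_{\cdot,j,k}$. Passing to the limit $k \to \infty$ via \cref{lemma-coeff-constant} therefore yields, for any fixed $j \geq 1$ and with the convention $\beta_{0,j} = 0$,
\[
\beta_{2,j} = \tfrac{1}{2}\beta_{1,j}, \qquad \beta_{i,j} = \beta_{i-1,j} - \left[1 + \tfrac{1}{i(i-1)}\right]\beta_{i-2,j} + \left[\tfrac{1}{3} + \tfrac{1}{i(i-2)}\right]\beta_{i-3,j}, \quad i \geq 3.
\]
Since this recursion is linear, homogeneous and $j$-independent, the sequence $(\beta_{i,j})_{i \geq 0}$ is a scalar multiple of a universal sequence $(\gamma_i)_{i \geq 0}$ satisfying the same recursion with $\gamma_0 = 0$ and $\gamma_1 = 1$. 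Multiplying by $x^i$ and summing yields the claimed factorization $B_{\circ,j}(x) = \beta_{1,j} G(x)$ with $G(x) \coloneqq \sum_i \gamma_i x^i$.

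To identify $G$ with the function in \cref{eq-GDef}, I would convert the recursion for $\gamma_i$ into an ODE via generating functions. Multiplying the $i \geq 3$ recursion by $i(i-1)(i-2)x^i$ (which vanishes at $i \in \{0,1,2\}$, so summation extends consistently to all $i$) and applying the identities $\sum_i i(i-1)(i-2)\gamma_{i-k} x^i = x^3 \bigl(x^k G(x)\bigr)'''$ for $k = 0, 1, 2, 3$, one should arrive after simplification at the third-order linear ODE
\[
\left(1 - x + x^2 - \tfrac{1}{3}x^3\right) G'''(x) - 3(1-x)^2 G''(x) + 7(1-x) G'(x) - 4 G(x) = 0.
\]
The substitution $u = 1-x$ simplifies the leading coefficient to $(2 + u^3)/3$, and a further substitution $v = u^3/2$ should exhibit this operator as containing, as a factor, the Gauss hypergeometric operator with parameters $a, b = (2 \pm \ii\sqrt{2})/3$ and $c = 1/3$. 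The standard pair of linearly independent hypergeometric solutions $F_{2,1}(v)$ and $v^{2/3} F_{4,5}(v)$ pulls back under $v = (1-x)^3/2$ precisely to the two summands in \cref{eq-GDef}, so a direct substitution then confirms that $F_{2,1}\{(1-x)^3/2\}$ and $(1-x)^2 F_{4,5}\{(1-x)^3/2\}$ both annihilate the derived ODE.

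To pin down $G$ as the unique correct combination I would impose the initial data $G(0) = 0$, $G'(0) = 1$, $G''(0) = 1$ (corresponding to $\gamma_0 = 0$, $\gamma_1 = 1$, $\gamma_2 = 1/2$). The vanishing $G(0) = 0$ fixes the ratio of the two coefficients to $C_1 : C_2 = F_{4,5}\{1/2\} : -F_{2,1}\{1/2\}$, and the overall prefactor $9/8$ is determined by the normalization $G'(0) = 1$. The principal obstacle, in my view, is that the derived ODE is third-order while the two-dimensional hypergeometric subspace in which $G$ lives is a proper subspace of its solution space; verifying simultaneously that the formula satisfies both $G'(0) = 1$ and $G''(0) = 1$ therefore amounts to a nontrivial Wronskian-type compatibility identity between hypergeometric function values and their first two derivatives at $v = 1/2$, reflecting the internal consistency between the $i = 2$ and $i \geq 3$ cases of \cref{eq-recn5jgeq1} that is partially masked by passing to the ODE derived from $i \geq 3$ alone.
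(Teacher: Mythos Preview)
Your proposal is correct and follows essentially the same route as the paper: pass to the limit in \cref{eq-recn5jgeq1} to obtain the $j$-independent recursion for $(\beta_{i,j})_i$, convert it to the same third-order ODE via $\sum_i i(i-1)(i-2)x^i$, and match the initial data $G(0)=0$, $G'(0)=1$, $G''(0)=1$. The paper dispatches your ``principal obstacle'' head-on by writing the full three-dimensional general solution---the third fundamental solution being $(x-1)\,{}_3F_2\{1,(3\pm\ii\sqrt2)/3;2/3,4/3;(x-1)^3/2\}$---and showing that the three initial conditions force its coefficient to vanish; the remaining normalization (your $9/8$) is then reduced to a hypergeometric identity certified by Zeilberger's algorithm.
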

\begin{proof}
The last case of \cref{eq-recn5jgeq1} and \cref{lemma-coeff-constant} imply that, for $j\geq 1$, the sequence $(\beta_{i,j})_i$ satisfies the recursion
\begin{equation*}
\beta_{i,j} = \beta_{i-1,j}-\left[1+\frac{1}{i(i-1)}\right]\beta_{i-2,j}+\left[\frac{1}{3}+\frac{1}{i(i-2)}\right]\beta_{i-3,j},\quad i\geq 3.
\end{equation*}
After multiplying by $i(i-1)(i-2)x^i$ and summing over $i$ this translates into the differential equation
\begin{equation}
\label{eq-ODE-Bcircj}
4B_{\circ,j}(x) - 7(1-x) B_{\circ,j}'(x) + 3 (1-x)^2 B_{\circ,j}''(x) - \frac{1}{3}\left[2+(1-x)^3\right] B_{\circ,j}'''(x)=0
\end{equation}
for the generating functions $B_{\circ,j}(x)=\sum_{i=0}^\infty{\beta_{i,j}x^i}$. The initial conditions are
\begin{equation}
\label{eq-initialcons-Bcircj}
B_{\circ,j}(0)=\beta_{0,j}=0,\quad B_{\circ,j}'(0)=\beta_{1,j}, \quad \text{and}\quad B_{\circ,j}''(0) = 2\beta_{2,j} = \beta_{1,j},
\end{equation}
where the last equality follows from the second case of \cref{eq-recn5jgeq1}. 
The general solution of \cref{eq-ODE-Bcircj} is
\begin{align*}
	B_{\circ,j}(x)=&c_{0,j} (x-1) \, _3F_2\left\{1,\frac{1}{3}\left(3-\ii \sqrt{2}\right),\frac{1}{3}\left(3+\ii \sqrt{2}\right);\frac{2}{3},\frac{4}{3};\frac{1}{2} (x-1)^3\right\}\\
&+c_{1,j} (1-x)^2 F_{4,5}\left\{\frac{(1-x)^3}{2}\right\}+c_{2,j} F_{2,3}\left\{\frac{(1-x)^3}{2}\right\}.
\end{align*}
This can be obtained with the aid of a computer algebra system or checked using the power series representation of hypergeometric functions. The coefficients $c_{m,j}$, $m=1,2,3$, are determined by the initial conditions \labelcref{eq-initialcons-Bcircj} and are given by $c_{0,j} = 0$, $c_{1,j} = d_1\beta_{1,j}$, and $c_{2,j} = d_2\beta_{1,j}$, where $d_1$ and $d_2$ are explicit hypergeometric constants. Observing that $d_1+d_2$ equals $40/9$, which can be proved via Zeilberger's algorithm \cite{zeilberger1996AB}, the claim follows. 
\end{proof}
\Cref{prop-Bcircj} implies the decomposition
\begin{equation*}
B_{\circ,\circ}(x,y) = \sum_{j=1}^\infty{B_{\circ,j}(x)y^j} + B_{\circ,0}(x) = G(x)B_{1,\circ}(y)+B_{\circ,0}(x),
\end{equation*}
and it thus only remains to compute the two functions $B_{1,\circ}$ and $B_{\circ,0}$. Unfortunately, the corresponding cases of \cref{eq-recn5j0,eq-recn5jgeq1} can not be analyzed by simply passing to the limit $k\to\infty$ because they involve coefficients $\alpha_{i,j,k}$ with $j>k$. We therefore need to compute the two-dimensional generating function of the array $(\alpha_{i,j,k})_{i,k}$, a task which is directly modelled after the proof of \cref{prop-Bcircj}.
\begin{proposition}
\label{prop-G-circ-circ-j}
For positive integers $j$, the generating function $A_{\circ,j,\circ}\colon (x,z)\mapsto\sum_{i,k\geq 0}{\alpha_{i,j,k}x^iz^k}$ is given by $A_{\circ,j,\circ}(x,z)=A_{1,j,\circ}(z)G_2(x,z)$, where
\begin{equation}
\begin{split}
\label{eq-G2Def}
G_2(x,z)=&\frac{9}{2(3-z)^2}\left[F_{4,5}\left\{\frac{z}{z-3}\right\}F_{2,1}\left\{\frac{(1-x)^3 z}{3-z}\right\}\right.\\
	&\left.\qquad\qquad- (1-x)^2F_{1,2}\left\{\frac{z}{3-z}\right\}F_{4,5}\left\{\frac{(1-x)^3z}{3-z}\right\}\right].
\end{split}
\end{equation}
In particular, $G_2(x,1)=G(x)$.
\end{proposition}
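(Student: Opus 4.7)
I would mirror the approach of \cref{prop-Bcircj}, but retain $z$ as an auxiliary parameter throughout rather than immediately passing to the limit $k \to \infty$. Starting from the third branch of \cref{eq-recn5jgeq1} (valid for $j \geq 1$ and $i \geq 3$), multiply both sides by $i(i-1)(i-2)\,x^i\,z^{k+1}$ and sum over $i \geq 3$ and $k \geq 0$. Because $\alpha_{i,j,0}=0$ for $(i,j)\neq(0,0)$, the $k$-shift turns $\alpha_{i,j,k+1}$ into $A_{i,j,\circ}(z)$ with no boundary correction, while producing an overall factor of $z$ on every right-hand term; the $x$-weights $i(i-1)(i-2)$ combined with the index shifts $i\mapsto i-1,i-2,i-3$ convert the recursion into a linear third-order ODE in $x$ for $A_{\circ,j,\circ}(\,\cdot\,,z)$ (with $z$ as a parameter) whose structure generalises \cref{eq-ODE-Bcircj}. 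By construction, the factors of $z$ appear precisely in the places where the inverse shift $k\to k+1$ was used, so that at $z=1$ one recovers, up to the overall factor $(1-z)$ introduced by \cref{lemma-Ggenred}, the Bak--Sneppen ODE itself.

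Next I would solve this parameter-dependent ODE. Its general solution has three independent hypergeometric branches whose arguments now contain the combination $z/(z-3)$; these can be produced by a computer algebra system and verified term-by-term against the Gauss series. The boundary data at $x=0$ are
\begin{equation*}
A_{\circ,j,\circ}(0,z)=0,\quad \partial_x A_{\circ,j,\circ}(0,z)=A_{1,j,\circ}(z),\quad \partial_x^2 A_{\circ,j,\circ}(0,z)=2A_{2,j,\circ}(z),
\end{equation*}
where the last quantity is linked to the second via the $i=2$ branch of \cref{eq-recn5jgeq1}: multiplying $\alpha_{2,j,k+1}=\alpha_{1,j,k}-\tfrac12\alpha_{1,j,k+1}$ by $z^{k+1}$ and summing yields $A_{2,j,\circ}(z)=(z-\tfrac12)A_{1,j,\circ}(z)$. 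Imposing these three conditions should force one of the integration constants to vanish and reveal that the other two are both proportional to $A_{1,j,\circ}(z)$, yielding the desired product factorisation $A_{\circ,j,\circ}(x,z)=A_{1,j,\circ}(z)\,G_2(x,z)$ with $G_2$ in the shape \cref{eq-G2Def}. The final identity $G_2(x,1)=G(x)$ can then be checked either by direct specialisation combined with the same contiguous hypergeometric identities of Zeilberger type \cite{zeilberger1996AB} that closed the proof of \cref{prop-Bcircj}, or by the observation that both sides satisfy \cref{eq-ODE-Bcircj} with identical initial data at $x=0$.

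The main obstacle I anticipate lies in writing the general solution of the $z$-dependent ODE in the symmetric two-term form \cref{eq-G2Def} and in eliminating the third basis function (the analogue of the $(x-1)\,{}_3F_2$ branch that appeared at $z=1$). At $z=1$ the spurious branch was killed by the vanishing condition at $x=0$; at general $z$, the corresponding cancellation is almost surely available, but appears to require a non-trivial reduction between hypergeometric functions, presumably reflecting a factorisation of the third-order differential operator over $\mathbb{Q}(x,z)$. Carrying this reduction out cleanly, rather than relying on a black-box algebra system, is the step I expect to consume most of the effort.
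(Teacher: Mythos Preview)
Your proposal is correct and follows essentially the same route as the paper: multiply the $i\geq 3$ branch of \cref{eq-recn5jgeq1} by $i(i-1)(i-2)x^i z^k$ (or equivalently $z^{k+1}$), sum over $i$ and $k$, and solve the resulting third-order ODE in $x$ with $z$ as a parameter, using the $i=0,1,2$ data as initial conditions. The paper's proof is only a two-sentence sketch of exactly this plan, so your write-up is in fact considerably more detailed; the ``obstacle'' you flag about killing the ${}_3F_2$ branch is handled, just as at $z=1$, by the vanishing condition $A_{\circ,j,\circ}(0,z)=0$ coming from $\alpha_{0,j,k}\equiv 0$, and does not require any additional hypergeometric reduction.
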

\begin{proof}
The proof proceeds by multiplying the $i\geq 3$ case of \cref{eq-recn5jgeq1} by $i(i-1)(i-2)x^iz^k$ and summing over both $i$ and $k$. After some algebraic manipulations one arrives at a differential equation similar to \cref{eq-ODE-Bcircj} which can be solved using the same techniques.
\end{proof}
Our next task will be to transform the first case of the recursion relation \labelcref{eq-recn5jgeq1} into differential form. This corresponds to the case $j\geq 1$ and $i=1$. The proof proceeds as before by multiplying the recursion equation by $y^jz^k$, summing over $j$ and $k$ and interchanging the order of summation where necessary.
\begin{proposition}
\label{prop-ODE1-2var}
The generating functions $A_{1,\circ,\circ}$ and $A_{\circ,0,\circ}$ satisfy the integral equation
\begin{equation}
\begin{split}
\label{eq-ODE1-2var}
\frac1z\left(A_{1,\circ,\circ}(y,z)-1\right) =& G(y,z)\int_y^1{(A_{1,\circ,\circ}( w ,z)-1)\dd w } + \left(A_{1,\circ,\circ}(y,z)-1\right)\int_0^y{G( w ,z)\dd w }\\
  &+(1-y)A_{\circ,0,\circ}(y,z) + \int_0^y{A_{\circ,0,\circ}( w ,z)\dd w }.
\end{split}
\end{equation}
\end{proposition}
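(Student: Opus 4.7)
My plan is to translate the $i=1$ case of recursion \eqref{eq-recn5jgeq1} directly into generating-function form by multiplying both sides by $y^jz^k$ and summing over $j\geq 1$ and $k\geq 0$. On the left, after accounting for the relation $\alpha_{1,0,k}=0$ ($k\geq 1$) stipulated in \cref{prop-rec-alpha5} together with the initial value $\alpha_{1,0,0}=1$ inherited from the uniform initial condition, the sum telescopes to $z^{-1}\bigl(A_{1,\circ,\circ}(y,z)-1\bigr)$.

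On the right I would handle the two sums appearing in the $i=1$ case separately. In the first, I would write $(p+1)^{-1}=\int_0^1 w^p\,\dd w$, extend the upper bound from $3k+1$ to $\infty$ (harmless since $\alpha_{j,p,k}=0$ for $p>3k+1$), and exchange sum and integral to get $\int_0^1 A_{\circ,\circ,\circ}(y,w,z)\,\dd w$. For the second, the partial-fraction identity $\frac{2p-j+1}{(p+1)(j-p)}=\frac{1}{j-p}-\frac{1}{p+1}$ splits the convolution into two Cauchy-type pieces. Re-indexing by $a=j-1-p$ and $b=p$ and recognising $y^{b+1}/(b+1)=\int_0^y t^b\,\dd t$ (respectively $y^{a+1}/(a+1)=\int_0^y u^a\,\dd u$) converts the two pieces into $\int_0^y A_{\circ,\circ,\circ}(u,y,z)\,\dd u$ and $-\int_0^y A_{\circ,\circ,\circ}(y,t,z)\,\dd t$, respectively.

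The crucial input is then the decomposition implied by \cref{prop-G-circ-circ-j}, which after summing over $j\geq 1$ reads
\begin{equation*}
A_{\circ,\circ,\circ}(X_1,X_2,z)=A_{\circ,0,\circ}(X_1,z)+G_2(X_1,z)\bigl(A_{1,\circ,\circ}(X_2,z)-1\bigr).
\end{equation*}
Substituting this into each of the three integrals above, the $G_2(y,z)$ contributions of the first and third integrals combine via $\int_0^1-\int_0^y=\int_y^1$ into $G_2(y,z)\int_y^1\bigl(A_{1,\circ,\circ}(w,z)-1\bigr)\,\dd w$; the second integral produces $\bigl(A_{1,\circ,\circ}(y,z)-1\bigr)\int_0^y G_2(u,z)\,\dd u$; and the $A_{\circ,0,\circ}$ pieces amalgamate into $(1-y)A_{\circ,0,\circ}(y,z)+\int_0^y A_{\circ,0,\circ}(u,z)\,\dd u$. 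This recovers \eqref{eq-ODE1-2var}, reading the symbol $G$ in that equation as $G_2$. The main pitfall is purely in the bookkeeping: for each of the three integrals the roles of ``first index'' and ``second index'' (hence which argument of $A_{\circ,\circ,\circ}$ becomes the convolution variable in the decomposition) are different, and confusing these is the only real source of error in what is otherwise a routine rearrangement.
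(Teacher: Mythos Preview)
Your approach---multiplying the $i=1$ case of \eqref{eq-recn5jgeq1} by $y^{j}z^{k}$, summing over $j\geq 1$ and $k\geq 0$, rewriting the reciprocals via $\tfrac{1}{p+1}=\int_0^1 w^{p}\,\dd w$ and the partial fraction $\tfrac{2p-j+1}{(p+1)(j-p)}=\tfrac{1}{j-p}-\tfrac{1}{p+1}$, and then invoking the factorisation of \cref{prop-G-circ-circ-j} to split $A_{\circ,\circ,\circ}$---is precisely the route the paper sketches in the sentence preceding the proposition. One small bookkeeping correction: the uniform initial data give $q_0\equiv 1/5$, hence $\alpha_{1,0,0}=0$ (not $1$); the nonzero boundary contribution is $\alpha_{1,0,1}=1$ (cf.\ \cref{tab-coeff-k1}), so the $j=0$ part of $A_{1,\circ,\circ}$ equals $z$ rather than $1$, though this discrepancy vanishes after the limit $z\to 1$ taken in \cref{coro-ODE1-1var} and does not affect the structure of your argument.
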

Setting $z=1$ in \cref{eq-ODE1-2var}, applying \cref{lemma-Ggenred} and differentiating once with respect to $x$ yields the following result which will be one of the main ingredients in the proof of our main theorem.
\begin{corollary}
\label{coro-ODE1-1var}
The generating functions $B_{1,\circ}$ and $B_{\circ,0}$ satisfy the integro-differential equation
\begin{align}
\label{eq-ODE1-1var}
B_{\circ,0}'(x)=\frac{1}{1-x}\left[-G'(x)\int_x^1{B_{1,\circ}(\xi)\dd\xi} + B_{1,\circ}'(x)\left(1-\int_0^x{G(\xi)\dd\xi}\right)\right].
\end{align}
\end{corollary}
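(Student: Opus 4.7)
My plan is to carry out precisely the two operations flagged in the paragraph preceding the corollary --- setting $z=1$ in \cref{eq-ODE1-2var} and then differentiating with respect to $x$ --- but with the understanding that the limit $z\to 1^-$ must be taken with care. By \cref{lemma-coeff-constant} the coefficient sequences $(\alpha_{1,j,k})_k$ and $(\alpha_{i,0,k})_k$ are eventually constant, so the generating functions $A_{1,\circ,\circ}(y,z)$ and $A_{\circ,0,\circ}(y,z)$ each have a simple pole at $z=1$, whose residues are $B_{1,\circ}(y)$ and $B_{\circ,0}(y)$ by \cref{lemma-Ggenred}. The natural regularization is therefore to multiply both sides of \cref{eq-ODE1-2var} by $(1-z)$ before passing to the limit.

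First I would rewrite the left-hand side as $\frac{1}{z}\bigl[(1-z)A_{1,\circ,\circ}(y,z)-(1-z)\bigr]$, which tends to $B_{1,\circ}(y)$ as $z\to 1^-$. On the right, each of the four summands carries at most one factor requiring regularization; combining $G_2(y,z)\to G(y)$ from \cref{prop-G-circ-circ-j}, the identity $(1-z)[A_{1,\circ,\circ}(w,z)-1]\to B_{1,\circ}(w)$ from \cref{lemma-Ggenred}, and interchanging the limit with the $w$-integrals (justified by the uniform boundedness in $w\in[0,1]$ of the partial sums defining $(1-z)A_{1,\circ,\circ}(w,z)$, which follows from \cref{lemma-coeff-constant}), I obtain the integral identity
\begin{equation*}
B_{1,\circ}(y) = G(y)\int_y^1 B_{1,\circ}(w)\,\dd w + B_{1,\circ}(y)\int_0^y G(w)\,\dd w + (1-y)B_{\circ,0}(y) + \int_0^y B_{\circ,0}(w)\,\dd w.
\end{equation*}

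Differentiating once with respect to $y$ and renaming $y\mapsto x$, I expect two pairs of cancellations: the contributions $\pm G(x)B_{1,\circ}(x)$ from the two $G$-integrals (one by Leibniz applied to $\int_x^1$, the other from $\tfrac{d}{dx}\int_0^x G(w)\,\dd w$), and the contributions $\pm B_{\circ,0}(x)$ from differentiating $(1-x)B_{\circ,0}(x)$ against $\int_0^x B_{\circ,0}(w)\,\dd w$. What survives is
\begin{equation*}
B_{1,\circ}'(x) = G'(x)\int_x^1 B_{1,\circ}(w)\,\dd w + B_{1,\circ}'(x)\int_0^x G(w)\,\dd w + (1-x)B_{\circ,0}'(x),
\end{equation*}
which upon solving for $B_{\circ,0}'(x)$ is exactly \cref{eq-ODE1-1var}.

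The only non-cosmetic obstacle is the justification of interchanging $\lim_{z\to 1^-}$ with the $w$-integrals; once \cref{lemma-coeff-constant} is in hand, the partial sums are pointwise convergent and uniformly bounded on $[0,1]$, so dominated convergence applies and the corollary reduces to the short algebraic rearrangement displayed above.
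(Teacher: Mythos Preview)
Your proposal is correct and follows exactly the route indicated in the paper: multiply \cref{eq-ODE1-2var} by $(1-z)$, let $z\to 1^-$ using \cref{lemma-Ggenred} and $G_2(\cdot,1)=G$, then differentiate once and observe the two pairs of cancellations you describe. The only addition beyond what the paper states is your explicit dominated-convergence justification for passing the limit through the $w$-integrals, which is a welcome detail rather than a deviation.
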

A second differential equation relating $B_{1,\circ}$ and $B_{\circ,0}$ can be derived from \cref{eq-recn5j0}.
\begin{proposition}
The generating functions $B_{1,\circ}$ and $B_{\circ,0}$ satisfy the differential equation
\begin{align}
\label{eq-ODE2-1var}
&4(1-x) \int_0^x{B_{\circ,0}(\xi)\dd\xi} - (1-x)^2B_{\circ,0}(x) + \frac13\left[2+(1-x)^3\right]B_{\circ,0}'(x)\notag\\
&\quad = 3(1-x)\left(1-\int_0^x{G(\xi)\dd\xi}\right)B_{1,\circ}(x) + \frac13\left[2+(1-x)^3\right]G(x)B_{1,\circ}'(x).
\end{align}
\end{proposition}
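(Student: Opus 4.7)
The plan mirrors the strategy used for \cref{prop-Bcircj} and \cref{prop-G-circ-circ-j}. I pass to the limit $k\to\infty$ in the three-index recursion \cref{eq-recn5j0} (justified by \cref{lemma-coeff-constant}), obtaining a recursion for $(\beta_{i,0})_{i\geq 3}$, and then multiply through by an appropriate weight $w_i x^i$ and sum over $i$ to produce a closed integro-differential equation for $B_{\circ,0}$.

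In detail, the interchange of $\lim_{k\to\infty}$ with each finite sum $\sum_{p=0}^{3k+1}\alpha_{i,p,k}/(p+1)$ is legal because, for every $i$ and $p$, the sequence $(\alpha_{i,p,k})_k$ is eventually constant and, for $p$ above the $g_k$-degree bound, identically zero; the limit is therefore $\sum_{p\geq 0}\beta_{i,p}/(p+1)=\int_0^1 B_{i,\circ}(y)\,\dd y$. A consequence of \cref{prop-Bcircj} and \cref{eq-initialcons-Bcircj} is the factorization $\beta_{i,j}=g_i\,\beta_{1,j}$ for all $i\geq 0$ and $j\geq 1$, where $g_i\coloneqq[x^i]G(x)$; hence $B_{i,\circ}(y)=\beta_{i,0}+g_i B_{1,\circ}(y)$ and $\int_0^1 B_{i,\circ}(y)\,\dd y=\beta_{i,0}+g_i I$ with $I\coloneqq\int_0^1 B_{1,\circ}(y)\,\dd y$. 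The diagonal sums $\sum_{p=0}^{i-2}\beta_{i-2-p,p}/(p+1)$ and $\sum_{p=0}^{i-2}\beta_{i-2-p,p}/(i-p)$ (together with their $i-3$ analogues) are Cauchy convolutions of $(g_m)_m$ with explicit sequences derived from $(\beta_{1,p})_p$, and upon multiplication by $x^i$ and summation they become products of $G(x)$ with antiderivatives of $B_{1,\circ}$ on $[0,x]$.

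The next step is to multiply the limit recursion by $ix^i$ and sum over $i\geq 3$. Using the partial fractions $1/((i-1)i)=1/(i-1)-1/i$ and $1/((i-2)i)=\tfrac12[1/(i-2)-1/i]$, a short bookkeeping calculation shows that the constant-coefficient part of the recursion contributes precisely
\begin{equation*}
x\left[\tfrac{1}{3}\bigl(2+(1-x)^3\bigr) B_{\circ,0}'(x) - (1-x)^2 B_{\circ,0}(x) + (1-x)\int_0^x B_{\circ,0}(\xi)\,\dd\xi\right],
\end{equation*}
which, upon division by $x$, accounts for all but a single term of the left-hand side of \cref{eq-ODE2-1var}. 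The sum terms on the right of the recursion contribute a further polynomial combination of $G$, $\int_0^x G$, $B_{1,\circ}$, $B_{1,\circ}'$, $\int_0^x B_{1,\circ}$, together with all the $I$-linear remainders.

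The main obstacle is the concluding algebraic reduction. Two things have to happen simultaneously: first, the missing $3(1-x)\int_0^x B_{\circ,0}$ must emerge on the left and combine with the $(1-x)\int_0^x B_{\circ,0}$ above, so that exactly the coefficient $4(1-x)$ featured in \cref{eq-ODE2-1var} is visible; and second, every trace of the unknown constant $I$ must disappear. Both should follow from the third-order ODE \cref{eq-ODE-Bcircj} obeyed by $G$ together with the boundary values $G(0)=0$, $G'(0)=1$, $G''(0)=1$ extracted from \cref{eq-initialcons-Bcircj}. Schematically, the $I$-linear pieces, once collected, assemble into the left-hand side of \cref{eq-ODE-Bcircj} evaluated at $G$ and hence vanish identically, while an integration by parts on the remaining $\int_0^x \xi B_{\circ,0}(\xi)\,\dd\xi$-type integrals converts them into the required multiple of $\int_0^x B_{\circ,0}$ plus pointwise boundary terms that are absorbed by the $(1-x)B_{1,\circ}$ and $G B_{1,\circ}'$ pieces on the right. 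Collecting everything then yields \cref{eq-ODE2-1var}.
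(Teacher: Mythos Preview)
Your overall strategy---pass to the limit in \cref{eq-recn5j0}, multiply by a weight, and sum---is sound, but there are two concrete problems with the execution that prevent the argument from closing.

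First, the paper handles the troublesome sums $\sum_{p=0}^{3k+1}\alpha_{i-1,p,k}/(p+1)$ differently: it substitutes the $i=1$ case of \cref{eq-recn5jgeq1}, which expresses each such sum as $\alpha_{1,i-1,k+1}$ minus a \emph{finite} convolution, \emph{before} taking the limit. This produces a recursion among the $\beta$'s that contains only finite sums and no global constant. Your route introduces $I=\int_0^1 B_{1,\circ}$ and then asserts it cancels; but with the weight $ix^i$ you propose, the $I$-linear contribution is
\[
I\sum_{i\geq 3}\Bigl[(i+2)g_{i-1}-\tfrac12(i+4)g_{i-2}\Bigr]x^i,
\]
which is a combination of $G$ and $G'$ only. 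It cannot ``assemble into the left-hand side of \cref{eq-ODE-Bcircj}'', because that identity is third order in $G$. So the cancellation mechanism you invoke is not available with this weight, and the $I$-terms do not drop out.

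Second, the weight itself is the wrong choice. The recursion \cref{eq-recn5j0} carries denominators $i$, $i-1$ and $i-2$; multiplying by $ix^i$ leaves $1/(i-1)$ and $1/(i-2)$ in the coefficients of $\beta_{i-2,0}$ and $\beta_{i-3,0}$, so your ``short bookkeeping calculation'' cannot yield the clean polynomial-coefficient expression you display. The paper instead multiplies by $i(i-1)(i-2)x^i$, which clears every denominator. With that weight the left-hand side naturally produces third derivatives of $B_{\circ,0}$, and one integrates back down to the first-order form \cref{eq-ODE2-1var}; moreover, had you kept $I$, its coefficient would then genuinely be a third-order differential expression in $G$ to which \cref{eq-ODE-Bcircj} applies.

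In short: either eliminate the infinite sums via the $i=1$ relation before passing to the limit (the paper's route), or use the weight $i(i-1)(i-2)x^i$ so that the $I$-coefficient becomes the ODE for $G$ and vanishes. As written, neither condition is met and the closing reduction does not go through.
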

\begin{proof}
Using the $i=1$ case of \cref{eq-recn5jgeq1} the two $\sum_{p=0}^{3k+1}$ sums can be eliminated from \cref{eq-recn5j0}. This allows to pass to the limit $k\to\infty$ and to replace the coefficients $\alpha_{i,j,k}$ by their steady-state values $\beta_{i,j}$ from \cref{lemma-coeff-constant}. Multiplying the resulting equation by $i(i-1)(i-2)x^i$ and summing over $i$ yields \cref{eq-ODE2-1var}. The details of the computation are lengthy and hence omitted.
\end{proof}
The two equations \labelcref{eq-ODE1-1var,eq-ODE2-1var}, together with the initial conditions that are tabulated in \cref{tab-coeff-k1,tab-coeff-k2,tab-coeff-k3,tab-coeff-k4,tab-coeff-k5}, are already enough to numerically compute the functions $B_{1,\circ}$ and $B_{\circ,0}$. It is, however, possible to simplify the problem even further to a single differential equation of order five.
\begin{proof}[Proof of \cref{thm-main}]
For better readability we introduce the antiderivatives
\begin{equation*}
	\mathcal{B}_0(x) = \int_0^{1-x}{B_{\circ,0}(\xi)\dd\xi},\quad \mathcal{B}_1(x) = \int_{1-x}^1{B_{1,\circ}(\xi)\dd\xi},\quad\text{and}\quad \mathcal{G}(x) = 1-\int_0^{1-x}{G(\xi)\dd\xi}.
\end{equation*}
A direct evaluation of the last antiderivative gives \cref{eq-defG}. After the substitution $x\to 1-y$, \cref{eq-ODE1-1var,eq-ODE2-1var} can be rewritten in terms of these functions as
\begin{equation}
\begin{aligned}
y\mathcal{B}_0''(y) =& -\mathcal{G}''(y)\mathcal{B}_1(y)+\mathcal{G}(y)\mathcal{B}_1''(y),\\
4y\mathcal{B}_0(y)+y^2\mathcal{B}_0'(y)+\frac13\left(2+y^3\right)\mathcal{B}_0''(y) =& 3y\mathcal{G}(y)\mathcal{B}_1'(y)-\frac13\left(2+y^3\right)\mathcal{G}'(y)\mathcal{B}_1''(y).
\end{aligned}
\end{equation}
Differentiating the latter equation three times eliminates the zeroeth and first derivative of $\mathcal{B}_0$; replacing the remaining second through fifth derivatives with expressions derived from the first equation of the last display produces the fifth-order differential equation $\sum_{i=0}^5{c_i(y)\mathcal{B}_1^{(j)}(y)}=0$. The coefficients $c_j$ are given by
\begin{align*}
\label{eq-coeffc}
c_0(y) =& 6 \left(5 y^3-2\right) \mathcal{G}''(y)+6y \left(5 y^3+2\right) \mathcal{G}^{(3)}(y)+y^2\left(9 y^3-6\right) \mathcal{G}^{(4)}(y)+y^3 \left(y^3+2\right) \mathcal{G}^{(5)}(y),\\
c_1(y) =& 3 y \left[\left(y^3+4\right) \mathcal{G}''(y)+y\left(3 y^3-4\right) \mathcal{G}^{(3)}(y)+y^2 \left(y^3+2\right) \mathcal{G}^{(4)}(y)\right],\\
c_2(y) =& 6 \left(2-5 y^3\right) \mathcal{G}(y)-6 y\left(13 y^3+2\right) \mathcal{G}'(y)-9 y^5 \mathcal{G}''(y)+y^3 \left[\left(11 y^3+4\right) \mathcal{G}^{(3)}(y)+\left(y^3+2\right) y \mathcal{G}^{(4)}(y)\right],\\
c_3(y) =& y \left[-3 \left(19 y^3+4\right) \mathcal{G}(y) + 3y \left(4-9 y^3\right) \mathcal{G}'(y)+4y^2 \left(4 y^3-1\right) \mathcal{G}''(y)+3 y^3 \left(y^3+2\right) \mathcal{G}^{(3)}(y)\right],\\
c_4(y) =& 3 y^2 \left[\left(2-6 y^3\right) \mathcal{G}(y) +2 y\left(y^3-1\right) \mathcal{G}'(y)+ y^2 \left(y^3+2\right) \mathcal{G}''(y)\right],\\
c_5(y) =& y^3 \left(y^3+2\right) \left(-\mathcal{G}(y)+y \mathcal{G}'(y)\right).
\end{align*}
It follows from \cref{prop-Bcircj} that the function $\mathcal{G}$ satisfies the differential equations
\begin{align*}
0 =&  3 y \mathcal{G}(y)+3 y^2 \mathcal{G}'(y)+\left(y^3+2\right) \mathcal{G}''(y), \\
0 =&  6 y^2 \mathcal{G}'(y)+\left(5 y^3-2\right) \mathcal{G}''(y)+y\left(y^3+2\right)\mathcal{G}^{(3)}(y), \\
0 =&  \left(11 y^3+4\right) \mathcal{G}''(y)+y\left(7 y^3-4\right) \mathcal{G}^{(3)}(y)+y^2\left(y^3+2\right) \mathcal{G}^{(4)}(y), \\
0 =& 18 y\left(11 y^3+16\right) \mathcal{G}^{(3)}(y) + 9 y^2\left(11 y^3-2\right) \mathcal{G}^{(4)}(y) + \left(11 y^6+26 y^3+8\right) \mathcal{G}^{(5)}(y). 
\end{align*}
Using these equations to eliminate the higher-order derivatives of $\mathcal{G}$ from the expressions for the coefficients $c_j(x)$ one obtains the claimed expressions \labelcref{eq-coeff-ODE}. Assertion \labelcref{eq-defBcirc0} is a reformulation of \cref{eq-ODE1-1var}.
\end{proof}

\bibliographystyle{plain}

\newpage
\appendix

\section{Coefficients}

In this appendix we collect the values of the coefficients $\alpha_{i,j,k}$ of the functions $q_k$ for $k=1,\ldots,5$. This serves as an illustration of \cref{lemma-coeff-constant} and to determine the initial conditions of the various differential equations encountered in the paper.

\begin{table}[ht]
\begin{subtable}[t]{.25\linewidth}\centering
\begin{tabular}{@{}c|ll@{}}
\toprule
\backslashbox{i}{j}  & 0   & 1 \\ \midrule
0 & 0   & 0 \\
1 & 1   & 0 \\
2 & -1  & 0 \\
3 & 1/3 & 0 \\
4 & 0   & 0 \\ \bottomrule
\end{tabular}
\caption{$k=1$}
\label{tab-coeff-k1}
\end{subtable}
\hspace{.1\linewidth}
\begin{subtable}[t]{.5\linewidth}\centering
\begin{tabular}{@{}c|llllll@{}}
\toprule
\backslashbox{i}{j} & 0     & 1    & 2    & 3    & 4    & 5 \\ \midrule
0   & 0     & 0    & 0    & 0    & 0    & 0 \\
1   & \boxed{\bf{0}}     & 1    & -3/2 & 1    & -1/4 & 0 \\
2   & 3     & -1/2 & 3/4  & -1/2 & 1/8  & 0 \\
3   & -19/3 & 0    & 0    & 0    & 0    & 0 \\
4   & 11/2  & 0    &      &      &      &   \\
5   & -9/4  & 0    &      &      &      &   \\
6   & 3/8   & 0    &      &      &      &   \\
7   & 0     & 0    &      &      &      &   \\ \bottomrule
\end{tabular}
\caption{$k=2$}
\label{tab-coeff-k2}
\end{subtable}
\caption{Coefficients $\alpha_{i,j,k}$ of $g_k$, $k=1,2$, as computed from \cref{eq-recgk} or \cref{eq-recn5jgeq1,eq-recn5j0}.}
\end{table}

\begin{table}[ht]
\begin{tabular}{@{}c|lllllllll@{}}
\toprule
\backslashbox{i}{j} & 0     & 1    & 2    & 3    & 4    & 5   & 6   & 7   & 8 \\ \midrule
0	& 0 & 0 	  & 0 		  & 0 		  & 0 		   & 0 		   & 0 		   & 0 		     & 0 \\
1	& \bf0 & \boxed{\bf1/5} & {29/10} & -{25/3} & {121/12} & -{32/5} & {32/15} & -{32/105} & 0 \\
2	& \boxed{\bf2/5} & {9/10} & -{59/20} & {31/6} & -{127/24} & {16/5} & -{16/15} & {16/105} & 0 \\
3	& {38/5} & -{5/3} & {5/2} & -{5/3} & {5/12} & 0 & 0 & 0 & 0 \\
4	& -{523/20} & 1 & -{3/2} & 1 & -{1/4} & 0 &  &  &  \\
5	& {75/2} & -{1/5} & {3/10} & -{1/5} & {1/20} & 0 &  &  &  \\
6 	& -{3551/120} & 0 & 0 & 0 & 0 & 0 &  &  &  \\
7	& {477/35} & 0 &  &  &  &  &  &  &  \\
8	& -{487/140} & 0 &  &  &  &  &  &  &  \\
9	& {487/1260} & 0 &  &  &  &  &  &  &  \\
10	& 0 & 0 &  &  &  &  &  &  &  \\ \bottomrule
\end{tabular}
\caption{Coefficients $\alpha_{i,j,3}$ of $g_3$ as computed from \cref{eq-recgk} or \cref{eq-recn5jgeq1,eq-recn5j0}.}
\label{tab-coeff-k3}
\end{table}

{\tiny
\begin{table}[ht]
\scalebox{.6}{
\begin{tabular}{@{}c|llllllllllll@{}}
\toprule
\backslashbox{i}{j} & 0     & 1    & 2    & 3    & 4    & 5   & 6   & 7   & 8   & 9   & 10   & 11 \\ \midrule
0	& 0 & 0 & 0 & 0 & 0 & 0 & 0 & 0 & 0 & 0 & 0 & 0 \\
1	& \bf0 & {\bf1/5} & \boxed{\bf1/2} & 7 & -{1879/60} & {8507/150} & -{10421/180} & {4589/126} & -{227/16} & {143/45} & -{143/450} & 0 \\
2	& {\bf2/5} & \boxed{\bf1/10} & {53/20} & -{71/6} & {3089/120} & -{10427/300} & {11189/360} & -{23329/1260} & {227/32} & -{143/90} & {143/900} & 0 \\
3 	& \boxed{\bf1} & {2/3} & -{19/3} & {134/9} & -{307/18} & {32/3} & -{32/9} & {32/63} & 0 & 0 & 0 & 0 \\
4	& {1019/60} & -{51/20} & {281/40} & -{133/12} & {517/48} & -{32/5} & {32/15} & -{32/105} & 0 & 0 & 0 & 0 \\
5	& -{2591/30} & {311/100} & -{1061/200} & {289/60} & -{673/240} & {32/25} & -{32/75} & {32/525} & 0 & 0 & 0 & 0 \\
6	& {39877/225} & -{223/120} & {223/80} & -{223/120} & {223/480} & 0 & 0 & 0 & 0 & 0 & 0 & 0 \\
7	& -{438271/2100} & {17/30} & -{17/20} & {17/30} & -{17/120} & 0 & 0 & 0 & 0 & 0 & 0 & 0 \\
8	& {15035/96} & -{17/240} & {17/160} & -{17/240} & {17/960} & 0 & 0 & 0 & 0 & 0 & 0 & 0 \\
9	& -{55459/720} & 0 & 0 & 0 & 0 & 0 & 0 & 0 & 0 & 0 & 0 & 0 \\
10	& {1224179/50400} & 0 & 0 & 0 & 0 & 0 & 0 & 0 & 0 & 0 & 0 & 0 \\
11	& -{113287/25200} & 0 & 0 & 0 & 0 & 0 & 0 & 0 & 0 & 0 & 0 & 0 \\
12	& {113287/302400} & 0 & 0 & 0 & 0 & 0 & 0 & 0 & 0 & 0 & 0 & 0 \\
13	& 0 & 0 & 0 & 0 & 0 & 0 & 0 & 0 & 0 & 0 & 0 & 0 \\ \bottomrule
\end{tabular}
}
\caption{Coefficients $\alpha_{i,j,4}$ of $g_4$ as computed from \cref{eq-recgk} or \cref{eq-recn5jgeq1,eq-recn5j0}.}
\label{tab-coeff-k4}
\end{table}
}

\begin{table}[ht]
\scalebox{.4}{
\begin{tabular}{@{}c|lllllllllllllll@{}}
\toprule
\backslashbox{i}{j} & 0     & 1    & 2    & 3    & 4    & 5   & 6   & 7   & 8   & 9   & 10   & 11   & 12   & 13   & 14\\ \midrule
0	& 0 & 0 & 0 & 0 & 0 & 0 & 0 & 0 & 0 & 0 & 0 & 0 & 0 & 0 & 0 \\
1	& \bf0 & {\bf1/5} & {\bf1/2} & \boxed{\bf3/5} & {319/20} & -{4889/50} & {71923/300} & -{357143/1050} & {879181/2800} & -{62029/315} & {53009/630} & -{12424/525} & {896/225} & -{896/2925} & 0 \\
2	& {\bf2/5} & {\bf1/10} & \boxed{\bf1/4} & {67/10} & -{943/24} & {31681/300} & -{319979/1800} & {1300879/6300} & -{958631/5600} & {64031/630} & -{267047/6300} & {6212/525} & -{448/225} & {448/2925} & 0 \\
3	& \bf1 & \boxed{\bf-2/15} & {31/15} & -20 & {1121/18} & -{9083/90} & {53257/540} & -{115301/1890} & {1135/48} & -{143/27} & {143/270} & 0 & 0 & 0 & 0 \\
4	& \boxed{\bf47/60} & {13/20} & -{359/40} & {371/12} & -{14231/240} & {11147/150} & -{11477/180} & {23473/630} & -{227/16} & {143/45} & -{143/450} & 0 & 0 & 0 & 0 \\
5	& {5651/150} & -{321/100} & {2947/200} & -{629/20} & {46771/1200} & -{23627/750} & {16469/900} & -{25969/3150} & {227/80} & -{143/225} & {143/2250} & 0 & 0 & 0 & 0 \\
6	& -{37733/150} & {1199/200} & -{17927/1200} & {7867/360} & -{5855/288} & {892/75} & -{892/225} & {892/1575} & 0 & 0 & 0 & 0 & 0 & 0 & 0 \\
7	& {214099/315} & -{12527/2100} & {45197/4200} & -{13609/1260} & {36457/5040} & -{272/75} & {272/225} & -{272/1575} & 0 & 0 & 0 & 0 & 0 & 0 & 0 \\
8	& -{2253961/2100} & {9967/2800} & -{93511/16800} & {20987/5040} & -{32411/20160} & {34/75} & -{34/225} & {34/1575} & 0 & 0 & 0 & 0 & 0 & 0 & 0 \\
9	& {33882311/30240} & -{1957/1512} & {1957/1008} & -{1957/1512} & {1957/6048} & 0 & 0 & 0 & 0 & 0 & 0 & 0 & 0 & 0 & 0 \\
10	& -{20505517/25200} & {289/1080} & -{289/720} & {289/1080} & -{289/4320} & 0 & 0 & 0 & 0 & 0 & 0 & 0 & 0 & 0 & 0 \\
11	& {694474463/1663200} & -{289/11880} & {289/7920} & -{289/11880} & {289/47520} & 0 & 0 & 0 & 0 & 0 & 0 & 0 & 0 & 0 & 0 \\
12	& -{497946013/3326400} & 0 & 0 & 0 & 0 & 0 & 0 & 0 & 0 & 0 & 0 & 0 & 0 & 0 & 0 \\
13	& {55461661/1544400} & 0 & 0 & 0 & 0 & 0 & 0 & 0 & 0 & 0 & 0 & 0 & 0 & 0 & 0 \\
14	& -{6257393/1201200} & 0 & 0 & 0 & 0 & 0 & 0 & 0 & 0 & 0 & 0 & 0 & 0 & 0 & 0 \\
15	& {6257393/18018000} & 0 & 0 & 0 & 0 & 0 & 0 & 0 & 0 & 0 & 0 & 0 & 0 & 0 & 0 \\
16	& 0 & 0 & 0 & 0 & 0 & 0 & 0 & 0 & 0 & 0 & 0 & 0 & 0 & 0 & 0 \\ \bottomrule
\end{tabular}
}
\caption{Coefficients $\alpha_{i,j,5}$ of $g_5$ as computed from \cref{eq-recgk} or \cref{eq-recn5jgeq1,eq-recn5j0}.}
\label{tab-coeff-k5}

\end{table}

\end{document}